\documentclass[12pt,a4paper]{article}
\usepackage{amsmath,amssymb,amsthm,amsfonts,amscd,euscript,verbatim, t1enc, newlfont}
\usepackage{hyperref}
\providecommand{\keywords}[1]{\textbf{\textit{Key words and phrases }} #1}
\providecommand{\subjclass}[1]{\textbf{\textit{2010 Mathematics Subject Classification.}} #1}

\hfuzz2pt
\theoremstyle{definition}
\newtheorem{theo}{Theorem}[subsection]

\newtheorem{pr}[theo]{Proposition}

 \newtheorem{coro}[theo]{Corollary}
  
\theoremstyle{remark}
\newtheorem{rema}[theo]{Remark}

\theoremstyle{definition}
\newtheorem{defi}[theo]{Definition}

\numberwithin{equation}{subsection}

 \newcommand\lan{\langle}
\newcommand\ra{\rangle}

\newcommand\ob{^{-1}}

\newcommand\dmeop{DM_{\zop}^{\operatorname{eff}}}

\newcommand\mg{M_{gm}}
\newcommand\schpr{\operatorname{SchPr}}

\newcommand\obj{\operatorname{Obj}}

\newcommand\id{\operatorname{id}}

\newcommand\cu{\underline{C}}
\newcommand\du{\underline{D}}

\newcommand\au{\underline{A}}
\newcommand\bu{\underline{B}}
\newcommand\hu{\underline{H}}
\newcommand\cp{\mathcal{P}}
\newcommand\perpp{{}^{\perp}}
\newcommand\opp{^{op}}
\newcommand\hf{{\underline{HF}}}
\newcommand\hrt{{\underline{Ht}}}
\newcommand\hw{{\underline{Hw}}}
\newcommand\hv{{\underline{Hv}}}

\newcommand\z{{\mathbb{Z}}}
\newcommand\zll{{\mathbb{Z}_{(\ell)}}}
\newcommand\zl{{\mathbb{Z}_{\ell}}}
\newcommand\ql{\mathbb{Q}_{\ell}}
\newcommand\znz{{\mathbb{Z}/{\ell}^n\z}}

\newcommand\zop{\mathbb{Z}[1/p]} 
\newcommand\com{\mathbb{C}}

\newcommand\q{{\mathbb{Q}}}
\newcommand\af{\mathbb{A}}
 
\newcommand\p{\mathbb{P}}
\newcommand\afo{\mathbb{A}^1} 
\newcommand\pt{\operatorname{pt}}
\newcommand\thomr{t_{\operatorname{hom}}^R}

\newcommand\al{\alpha}

\newcommand\ns{\{0\}}

\DeclareMathOperator\torr{\operatorname{Tor}}

\DeclareMathOperator\inli{\varinjlim}
\DeclareMathOperator\kar{\operatorname{Kar}}
\newcommand\codim{\operatorname{codim}}

\newcommand\chow{\operatorname{Chow}}

\newcommand\chower{\operatorname{Chow}^{\operatorname{eff}}_R}
\newcommand\chowez{\operatorname{Chow}^{\operatorname{eff}}_{\z}}
\newcommand\chowr{\operatorname{Chow}_R}
\newcommand\dmbir{DM_R^o{}}
\newcommand\dmgbir{DM_{\operatorname{gm},R}^o{}}
\newcommand\dmgbirp{DM_{\operatorname{gm},R}^{o'}}
\newcommand\grwd{Gr^{W}}
\newcommand\tst{t^{st}}

\newcommand\hir{\operatorname{HI}_R}
\newcommand\hiz{\operatorname{HI}_R^o}
\newcommand\for{\operatorname{For}}
\newcommand\ca{{\mathcal{A}}}
\newcommand\dmer{DM^{\operatorname{eff}}_{R}{}}
\newcommand\dmger{DM^{\operatorname{eff}}_{\operatorname{gm},R}{}}

\newcommand\dmgr{DM_{\operatorname{gm}}^R{}}
\newcommand\mgr{M_{\operatorname{gm}}^R}

\newcommand\mgb{M^o_{\operatorname{gm}}}
\newcommand\wgbir{w^o_{\operatorname{gm}}}
\newcommand\wgbirp{w^{o'}_{\operatorname{gm}}}
\newcommand\hwgbir{\hw^o_{\operatorname{gm}}}
\newcommand\mgcr{M_{\operatorname{gm}}^{c}}

\newcommand\tcho{t_{\chow}}

\newcommand\ab{Ab}
\newcommand\var{\operatorname{Var}}
\newcommand\sv{\operatorname{SmVar}}
\newcommand\sm{\underline{\operatorname{Sm}}}
\newcommand\spv{\operatorname{SmPrVar}}

\newcommand\spe{\operatorname{Spec}}

\DeclareMathOperator\imm{\operatorname{Im}}
\DeclareMathOperator\co{\operatorname{Cone}}

\DeclareMathOperator\cha{\operatorname{char}}

\newcommand\modd{\operatorname{mod}}
\newcommand\zlmod{\zl-\operatorname{mod}}
\newcommand\znzmod{{\mathbb{Z}/{\ell}^n\z}-\operatorname{mod}}

\newcommand\tr{\operatorname{tr}}
\newcommand\add{\operatorname{add}}
\newcommand\ii{i^0}
\newcommand\bra{\mathcal{B}r}
\newcommand\rgm{RG_m}

\newcommand\wstu{w_{st}}

\newcommand\cuw{\cu_w}
\newcommand\kw{K_{\mathfrak{w}}}

\newcommand\wchows{w_{\chow}^{eff}}

\newcommand\wchowsg{w_{\chow}^{eff,gm}}

\newcommand\wchow{w_{\chow}}
\newcommand\wchowg{w^{gm}_{\chow}}
\newcommand\wbir{w^{o}}

\newcommand\chowbir{\chow^{o}}

\newcommand\rnrz{R_{nr}^{0}}

\begin{document}

 \title{On  Chow-pure cohomology and  Euler characteristics  for motives and varieties, and their relation to unramified cohomology and Brauer groups} 
 \author{M.V. Bondarko
   \thanks{ 
 The main results of the paper were  obtained under support of the Russian Science Foundation grant no. 16-11-00200.
} }
 \maketitle
\begin{abstract}
 We  study Grothedieck groups of triangulated categories using weight structures, weight complexes, and the corresponding pure  (co)homological functors. We prove some general statements  on $K_0$ of weighted  categories and apply it to Voevodsky motives endowed with so-called Chow weight structures.  We obtain certain "motivic substitutes" for smooth compactifications of smooth varieties over arbitrary perfect fields; this enables us to make certain unramified cohomology and Euler characteristic calculations  that are closely related to  results of T. Ekedahl and B. Kahn.
\end{abstract}


\subjclass{Primary 14C15 18E30; Secondary  14M20 14E18 14G17 14F20 14F22 14L30.}

\keywords{Triangulated category, weight structure, Grothendieck groups, motives, sheaves with transfers, unramified cohomology, Brauer groups.}

\tableofcontents

 \section*{Introduction}

In this paper we discuss 
 {\it weighted} triangulated categories\footnote{These are the triangulated categories endowed with weight structures  as introduced 
 by the author and independently by D. Pauksztello.} and apply some general results to Voevodsky motives.

  In particular, we use the properties of {\it weight-exact} localizations 
	  to obtain that for any smooth variety $X$ there exists a Chow motif that is "birational" to it. This result is a trivial  consequence of the Hironarka's resolution of singularities if the characteristic $p$ of the base field is  zero, but it is 	 new in the case $p>0$ (we prove it for $\zop$-linear motives). 
	Applying certain results of \cite[\S7]{kabir} we deduce that this statement can be applied to unramified cohomology calculations. In particular, one may consider the $\zll$-component for the (cohomological) Brauer groups of smooth varieties, where $\ell$ is a prime distinct from $p$. 

Moreover, we prove that weight complex functors (as introduced in \cite{bws} and \cite{bwcp}; however, the term originates from \cite{gs})  can be used to calculate the Grothendieck groups of ({\it bounded}) weighted categories. The corresponding general Theorem \ref{teulerw} extends Theorem 5.3.1 of \cite{bws}. In particular, it says  that if a weight structure $w$ on $\cu$ is bounded then any additive functor $F$ from the heart $\hw$ of $w$ into a category $\au$ gives a homomorphism $ F_{K_0}: K_0^{\tr}(\cu)\to K_0^{\add}(\au)$ (see Definition \ref{dk0} below); one may call homomorphisms of this sort {\it Euler characteristic} ones. An important observation here that in the case where $\cu=\dmger$ (the category of $R$-linear geometric Voevodsky motives)  one can apply $ F_{K_0}$ to motives with compact support $\mgcr(-)$ 
  to obtain a function $G$ from the set of $k$-varieties into $K_0^{\add}(\au)$  that satisfies the scissors relation $$G(X)=G(X\setminus Z)+G(Z)$$ if $Z$ is a closed subvariety of $X$. 
	 In Theorem \ref{tceked} we take $F=E^i$, where $E^i$ is the obvious extension of $H^i(-,\zl)$ to $\chower$ (where $R$ is a localization of $\zop$) to obtain results closely related to Ekedahl's invariant calculations in \cite[Theorem 5.1]{eked} (see Remark \ref{reked}(\ref{irek1})).   

Now we describe the contents of the paper. More details can be found at the beginnings of sections.

In \S\ref{swsr} we recall several definitions and results from previous papers; they are mostly related to triangulated categories and weight structures. In particular, in \S\ref{stable} we recall some of the theory of (strong) weight complexes along with their relation to the so-called pure (co)homological functors and calculations in Grothendieck groups of triangulated categories; these results are 
 generalized in \S\ref{skz}.

In \S\ref{smotchow} we apply the results of previous sections to to the study of various categories of  motives (including the birational ones introduced  in \cite{kabir}), Chow weight structures on them, and their unramified cohomology. In particular, we study the (pure) unramified Brauer group functor  and compute $ E^i_{K_0}(\mgcr(X))$ for certain smooth $X/k$ and $E^i$ as above. We also 
  demonstrate that this case of our results is closely related to 
	 Theorem 5.1 of \cite{eked}.
 
In \S\ref{ssupl} we give some detail and generalizations for the results of the previous sections. In particular, Theorem \ref{teulerw}(II.1) calculates the Grothendieck group of an (arbitrary) bounded weighted category, and Proposition \ref{plocw} can be used to study weight-exact localizations of ("big") motivic categories.

The author is deeply grateful to Federico Scavia for calling his attention to the topic and to the preprint \cite{eked}, to Vladimir Sosnilo for telling  him Proposition \ref{plocw}(\ref{iwl312}), 
 and to Bruno Kahn for his very interesting comments. 

\section{On weight structures in triangulated categories: reminder}\label{swsr}

In this section we recall those parts of the theory of weight structures that will be applied below. 

In \S\ref{snotata} we introduce some definitions and notation for (mostly, triangulated) categories.

In \S\ref{ssws} we give the basic definitions and properties of weight structures; this includes a new statement on weight-exact localizations. 

In \S\ref{stable} we recall some of the theory of (strong) weight complexes along with their relation to the so-called pure (co)homological functors and calculations in Grothendieck groups of triangulated categories.


\subsection{Some definitions 
for (triangulated) categories}\label{snotata} 
\begin{itemize}

\item All products and coproducts in this paper will be small.

\item Given a category $C$ and  $X,Y\in\obj C$  we will write
$C(X,Y)$ for  the set of morphisms from $X$ to $Y$ in $C$.

\item For categories $C'$ and $C$ we write $C'\subset C$ if $C'$ is a full 
subcategory of $C$.

\item Given a category $C$ and  $X,Y\in\obj C$, we say that $X$ is a {\it
retract} of $Y$ 
 if $\id_X$ can be 
 factored through $Y$.\footnote{Clearly,  if $C$ is triangulated or abelian, 
then $X$ is a retract of $Y$ if and only if $X$ is its direct summand.}\ 

\item Below $\bu$ will always  denote some additive category.

\item A 
 subcategory $\hu$ of   $\bu$ 
is said to be {\it retraction-closed} in $\bu$ if it contains all $\bu$-retracts of its objects. 

Moreover, for  a class $D$ of objects of $\bu$ we use the notation $\kar_{\bu}(D)$ for the class of all $\bu$-retracts of elements of $D$.

\item We will say that 
 $\bu$ is {\it idempotent complete} if any idempotent endomorphism gives a direct sum decomposition in it.

\item The {\it idempotent completion} $\kar(\bu)$ (no lower index) of 
 $\bu$ is the category of ``formal images'' of idempotents in $\bu$.
Respectively, its objects are the pairs $(B,p)$ for $B\in \obj \bu,\ p\in \bu(B,B),\ p^2=p$, and the morphisms are given by the formula 
$$\kar(\bu)((X,p),(X',p'))=\{f\in \bu(X,X'):\ p'\circ f=f \circ p=f \}.$$ 
 The correspondence  $B\mapsto (B,\id_B)$ (for $B\in \obj \bu$) fully embeds $\bu$ into $\kar(\bu)$, and it is well known that $\kar(\bu)$ is essentially the smallest idempotent complete additive category containing $\bu$.

\item We will use the term "exact functor" for a functor of
triangulated categories (i.e.  for a functor that preserves the
structures of triangulated categories). 

\item The symbol $\cu$ below will always denote some triangulated category;  it will often be endowed with a weight structure $w$ (which usually will be bounded). The symbols $\cu'$ and $\du$ will  also be used  for triangulated categories only.

\item For any  $A,B,C \in \obj\cu$ we will say that $C$ is an {\it extension} of $B$ by $A$ if there exists a distinguished triangle $A \to C \to B \to A[1]$.

 A class $\cp\subset \obj \cu$ is said to be  {\it extension-closed}
    if it 		is closed with respect to extensions and contains $0$.  

\item 
We will write $\lan \cp\ra$ for the smallest  full retraction-closed triangulated subcategory of $\cu$ containing $\cp$; we will  call  $\lan \cp\ra$  the triangulated subcategory {\it densely generated} by $\cp$ (in particular, in the case $\cu=\lan \cp \ra$).\footnote{Alternatively, $\lan \cp\ra$  can be called the thick subcategory of $\cu$ generated by $\cp$.} Moreover, if $\hu$ is the full subcategory of $\cu$ such that $\obj \hu=\cp$ then we will also say that $\cu$ is densely generated by $\hu$.

\item 
The  smallest  {\bf strictly}  full triangulated subcategory of $\cu$ containing $\cp$ will be called the subcategory {\it strongly generated} by $\cp$ and also by $\hu$. 


\item For $X,Y\in \obj \cu$ we will write $X\perp Y$ if $\cu(X,Y)=\ns$. 

For
$D,E\subset \obj \cu$ we write $D\perp E$ if $X\perp Y$ for all $X\in D,\
Y\in E$.

Given $D\subset\obj \cu$ we  will write $D^\perp$ for the class
$$\{Y\in \obj \cu:\ X\perp Y\ \forall X\in D\}.$$
Dually, ${}^\perp{}D$ is the class $\{Y\in \obj \cu:\ Y\perp X\ \forall X\in D\}$.


	\item We will say that an additive covariant (resp. contravariant) functor from $\cu$ into an abelian category $\au$ is {\it homological} (resp. {\it cohomological}) if it converts distinguished triangles into long exact sequences. 
	
	For a (co)homological functor $H$ and $i\in\z$ we will write $H_i$ (resp. $H^i$) for the composition $H\circ [-i]$. 

\item 
We will write $K(\bu)$ for the homotopy category of (cohomological) complexes over $\bu$. Its full subcategory of
bounded complexes will be denoted by $K^b(\bu)$. We will write $M=(M^i)$ if $M^i$ are the terms of the complex $M$.
\end{itemize}

\subsection{Weight structures: basics}\label{ssws}

Let us recall some basic  definitions of the theory of weight structures. 

\begin{defi}\label{dwstr}

A pair of subclasses $\cu_{w\le 0},\cu_{w\ge 0}\subset\obj \cu$ 
will be said to define a {\it weight structure} $w$ on a triangulated category  $\cu$ if 
they  satisfy the following conditions.

(i) $\cu_{w\ge 0}$ and $\cu_{w\le 0}$ are 
retraction-closed in $\cu$ (i.e., contain all $\cu$-retracts of their objects).

(ii) {\bf Semi-invariance with respect to translations.}

$\cu_{w\le 0}\subset \cu_{w\le 0}[1]$, $\cu_{w\ge 0}[1]\subset
\cu_{w\ge 0}$.

(iii) {\bf Orthogonality.}

$\cu_{w\le 0}\perp \cu_{w\ge 0}[1]$.

(iv) {\bf Weight decompositions}.

 For any $M\in\obj \cu$ there
exists a distinguished triangle
$$LM\to M\to RM {\to} LM[1]$$
such that $LM\in \cu_{w\le 0} $ and $ RM\in \cu_{w\ge 0}[1]$.

Moreover, we will say that a triangulated category $\cu$ is {\it weighted} if it is endowed with a fixed weight structure.  
\end{defi}

We will also need the following definitions.

\begin{defi}\label{dwso}
Let $i,j\in \z$; assume that a triangulated category $\cu$ is endowed with a weight structure $w$.

\begin{enumerate}
\item\label{idh}
The full  subcategory $\hw$ of $ \cu$ whose objects are
$\cu_{w=0}=\cu_{w\ge 0}\cap \cu_{w\le 0}$ 
 is called the {\it heart} of 
$w$.

\item\label{id=i}
 $\cu_{w\ge i}$ (resp. $\cu_{w\le i}$, resp. $\cu_{w= i}$) will denote the class $\cu_{w\ge 0}[i]$ (resp. $\cu_{w\le 0}[i]$, resp. $\cu_{w= 0}[i]$).

\item\label{idbob} We will call $\cup_{i\in \z} \cu_{w\ge i}$ (resp. $\cup_{i\in \z} \cu_{w\le i}$) the class of {\it $w$-bounded below} (resp., {\it $w$-bounded above}) objects of $\cu$; we will write $\cu^b$ for the full subcategory of $\cu$ whose objects are bounded both above and below. 



\item\label{idbo} We will  say that $(\cu,w)$ is {\it  bounded} and $\cu$ is a {\it bounded weighted category}  if $\cu^b=\cu$,  i.e., if $\cup_{i\in \z} \cu_{w\le i}=\obj \cu=\cup_{i\in \z} \cu_{w\ge i}$.

\item\label{idwe}  Let 
  $\cu'$ be a triangulated category endowed with  a weight structure $w'$; let $F:\cu\to \cu'$ be an exact functor.

Then $F$ is said to be  {\it  weight-exact} (with respect to $w,w'$) if it maps $\cu_{w\le 0}$ into $\cu'_{w'\le 0}$ and
sends $\cu_{w\ge 0}$ into $\cu'_{w'\ge 0}$. 

\item\label{idrest}
Let $\du$ be a full triangulated subcategory of $\cu$.

We will say that $w$ {\it restricts} to $\du$ whenever the couple $w_{\du}=(\cu_{w\le 0}\cap \obj \du,\ \cu_{w\ge 0}\cap \obj \du)$ is a weight structure on $\du$. If this is the case then we will call $(\cu_{w\le 0}\cap \obj \du,\ \cu_{w\ge 0}\cap \obj \du)$ the {\it restriction} of $w$ to $\du$, and say that the weight structure $w_{\du}$ {\it extends} to $\cu$.

\item\label{idneg}
 We will say that the subcategory 
 $\hu$  is {\it negative} (in $\cu$) if $\obj \hu\perp (\cup_{i>0}\obj (\hu[i]))$.\footnote{In several papers (mostly, on representation theory and related matters) a negative subcategory 
satisfying certain additional assumptions was said to be {\it silting}; this notion generalizes the one of {\it tilting}}

\item\label{idwic} We will say that an additive category $\bu$ is {\it weakly idempotent complete}\footnote{It appears that this term was introduced in \cite[Definition 7.2]{buhler}, whereas the notion itself probably originates from  \cite{freydsplit}, where it was said that 
{\it retracts have complements} in $\bu$.} 
if any split monomorphism $i:X\to Y$ (that is, $\id_X$ equals $p\circ i$ for some $g\in \bu(Y,X)$)  is isomorphic to the 
	 monomorphism $\id_X\bigoplus 0:X\to X\bigoplus Z$ for some object  $Z$ of $\bu$. 
\end{enumerate}
\end{defi}

\begin{rema}\label{rstws}
1. A  simple (and still  quite useful) example of a weight structure comes from the stupid filtration on the homotopy category of cohomological complexes $K(\bu)$ for an arbitrary additive  $\bu$; it can also be restricted to the subcategory $K^b(\bu)$ of bounded complexes (see Definition \ref{dwso}(\ref{idrest})). In this case $K(\bu)_{\wstu\le 0}$ (resp. $K(\bu)_{\wstu\ge 0}$) is the class of complexes that are homotopy equivalent to complexes  concentrated in degrees $\ge 0$ (resp. $\le 0$); see Remark 1.2.3(1) of \cite{bonspkar} for more detail. 

The heart of the weight structure $\wstu$ is the retraction-closure  of $\bu$  in  $K(\bu)$; hence it is equivalent to $\kar(\bu)$ (since both $K^{-}(\bu)$ and $K^+(\bu)$ are idempotent complete).

2. In the current paper we use the ``homological convention'' for weight structures; 
it was previously used in  
\cite{wild}, 
\cite{bonspkar}, \cite{binters}, 
and in \cite{bokum}, 
whereas in \cite{bws} and \cite{bger}, 
 the ``cohomological convention'' was used. In the latter convention  the roles of $\cu_{w\le 0}$ and $\cu_{w\ge 0}$ are essentially interchanged, that is, one considers   $\cu^{w\le 0}=\cu_{w\ge 0}$ and $\cu^{w\ge 0}=\cu_{w\le 0}$. 
 
 We also recall that D. Pauksztello has introduced weight structures independently in \cite{konk}; he called them
co-t-structures. 

3. Till section \ref{ssupl} 
 the reader may assume that all weight structures we consider are bounded. Note that weight structures of this sort have an easy description in terms of negative subcategories; see Proposition \ref{pbw}(\ref{ineg}, \ref{igen}, \ref{igenlm}).
\end{rema}

\begin{pr}\label{pbw}
Let  
$m\le n\in\z$, $M,M'\in \obj \cu$, $g\in \cu(M,M')$. 

\begin{enumerate}
\item \label{idual}
The axiomatics of weight structures is self-dual, i.e., for $\cu'=\cu^{op}$
(so $\obj\cu'=\obj\cu$) there exists the (opposite)  weight structure $w'$ for which $\cu'_{w'\le 0}=\cu_{w\ge 0}$ and $\cu'_{w'\ge 0}=\cu_{w\le 0}$.

\item\label{iext}  $\cu_{w\le 0}$, $\cu_{w\ge 0}$, and $\cu_{w=0}$ are (additive and) extension-closed.

\item\label{igenlm}
 $\cu^b$ is the triangulated subcategory of  $\cu$ strongly generated by $\cu_{w=0}$.



\item\label{iort}  $\cu_{w\ge 0}=(\cu_{w\le -1})^{\perp}$ and $\cu_{w\le 0}={}^{\perp} \cu_{w\ge 1}$.


\item\label{itstable} $\hw$ is weakly idempotent complete. 

\item\label{ineg} Any full subcategory of $\hw$ is a negative subcategory of $\cu$.

\item\label{igen} Assume that $\cu$ is densely generated by its negative additive subcategory $\bu$. 

Then there exists a unique weight structure $w_{\bu}$ on $\cu$ whose heart contains $\bu$. 
Moreover, this weight structure is bounded, $\cu_{w_{\bu}=0}=\kar_{\cu}(\obj \bu)$, and $\cu_{w_{\bu}\ge 0}$ (resp. $\cu_{w_{\bu}\le 0}$) is the smallest class of objects that contains $\obj\bu[i]$ for all $i\ge 0$ (resp. $i\le 0$), and is also  extension-closed and retraction-closed in $\cu$.

\item\label{iwex} Let   $\cu'$ be a triangulated category endowed with  a weight structure $w'$; let $F:\cu\to \cu'$ be an exact functor.

If $F$ is weight-exact then it sends $\cu_{w=0}$ into $\cu'_{w'=0}$. Moreover, if $w$ is bounded then the converse implication is valid is well. 
\end{enumerate}
\end{pr}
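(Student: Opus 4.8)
The first implication is immediate from the definitions, so the plan there is just bookkeeping: since $\cu_{w=0}=\cu_{w\ge 0}\cap\cu_{w\le 0}$ and $\cu'_{w'=0}=\cu'_{w'\ge 0}\cap\cu'_{w'\le 0}$, if $F$ is weight-exact and $M\in\cu_{w=0}$, then $M\in\cu_{w\le 0}$ forces $F(M)\in\cu'_{w'\le 0}$ while $M\in\cu_{w\ge 0}$ forces $F(M)\in\cu'_{w'\ge 0}$, whence $F(M)\in\cu'_{w'=0}$.

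For the converse, assume $w$ is bounded and $F(\cu_{w=0})\subset\cu'_{w'=0}$. I would establish $F(\cu_{w\ge 0})\subset\cu'_{w'\ge 0}$ by induction on the width of the weight range of an object, and then deduce the dual inclusion $F(\cu_{w\le 0})\subset\cu'_{w'\le 0}$ by applying the very same argument in the opposite categories, using the self-duality of the axiomatics (Proposition \ref{pbw}(\ref{idual})). Since $w$ is bounded, any $M\in\cu_{w\ge 0}$ lies in $\cu_{w\ge 0}\cap\cu_{w\le n}$ for some $n\ge 0$, and I induct on $n$. The base case $n=0$ is precisely the hypothesis, as $F(\cu_{w=0})\subset\cu'_{w'=0}\subset\cu'_{w'\ge 0}$.

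For the inductive step with $n\ge 1$, I would fix a weight decomposition $LM\to M\to RM\to LM[1]$ with $LM\in\cu_{w\le 0}$ and $RM\in\cu_{w\ge 1}$. The crucial point is that the lower truncation is automatically pure: from this triangle $LM$ is an extension of $RM[-1]\in\cu_{w\ge 0}$ by $M\in\cu_{w\ge 0}$, so extension-closedness of $\cu_{w\ge 0}$ (Proposition \ref{pbw}(\ref{iext})) gives $LM\in\cu_{w\ge 0}$, hence $LM\in\cu_{w=0}$. Similarly, extension-closedness of $\cu_{w\le n}=\cu_{w\le 0}[n]$ yields $RM\in\cu_{w\le n}$, so $RM[-1]$ lies in $\cu_{w\ge 0}\cap\cu_{w\le n-1}$ and the inductive hypothesis applies to it. Applying the exact functor $F$ gives a distinguished triangle $F(LM)\to F(M)\to F(RM)\to F(LM)[1]$; here $F(LM)\in\cu'_{w'=0}\subset\cu'_{w'\ge 0}$ by hypothesis, and $F(RM)=F(RM[-1])[1]\in\cu'_{w'\ge 1}\subset\cu'_{w'\ge 0}$ by the inductive hypothesis together with exactness. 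As $F(M)$ is then an extension of $F(RM)$ by $F(LM)$, extension-closedness of $\cu'_{w'\ge 0}$ forces $F(M)\in\cu'_{w'\ge 0}$, closing the induction.

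The argument is essentially routine once set up; the only point requiring care—and the genuine conceptual content—is the observation that the lower weight truncation of an object of $\cu_{w\ge 0}$ is forced to be pure of weight $0$, which is exactly what allows the hypothesis on $\cu_{w=0}$ to propagate up the weight tower. Boundedness of $w$ enters precisely to guarantee that this peeling terminates after finitely many steps, so that the induction is well-founded.
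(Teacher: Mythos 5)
Your proposal addresses only assertion \ref{iwex} of the eight-part proposition; the paper handles assertions \ref{idual}--\ref{igen} by citations (to \cite{bws}, \cite{bonspkar}, \cite{bvheart}), so I assess the one assertion you actually prove. There your argument is correct: the base case is exactly the hypothesis; the key observation that the truncation $LM$ of $M\in\cu_{w\ge 0}\cap\cu_{w\le n}$ is forced into $\cu_{w=0}$ (extension-closedness of $\cu_{w\ge 0}$ applied to the rotated triangle $RM[-1]\to LM\to M\to RM$) is the standard purity-of-truncation lemma and is sound; $RM[-1]\in\cu_{w\ge 0}\cap\cu_{w\le n-1}$ does make the induction well-founded; and passing to opposite categories via assertion \ref{idual} legitimately yields the inclusion $F(\cu_{w\le 0})\subset\cu'_{w'\le 0}$. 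One harmless slip: with the paper's convention the rotated triangle exhibits $LM$ as an extension of $M$ by $RM[-1]$, not of $RM[-1]$ by $M$; since both objects lie in $\cu_{w\ge 0}$, nothing is affected.

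The route differs from the paper's only in packaging. The paper deduces the converse implication from assertion \ref{igenlm} (with Lemma 2.7.5 of \cite{bger} as a reference): by assertion \ref{igen} applied to $\bu=\hw$, boundedness makes $\cu_{w\ge 0}$ the smallest extension-closed and retraction-closed class containing $\obj\hw[i]$ for $i\ge 0$, and $F^{-1}(\cu'_{w'\ge 0})$ is such a class (preimages under exact functors of extension-closed, retraction-closed classes are again such), with the dual statement for $\cu_{w\le 0}$. Your induction on the weight amplitude is precisely the dévissage that proves this minimality description, unwound by hand in the case at issue. What your version buys is a self-contained argument that avoids the citation and never needs retraction-closedness (every object of $\cu_{w\ge 0}\cap\cu_{w\le n}$ literally admits the required weight decomposition, so no retracts intervene); what the paper's version buys is brevity, since the generation statement \ref{igen} already encapsulates both the purity observation and the induction.
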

\begin{proof}
 Assertions \ref{idual}--\ref{itstable} were essentially established  in \cite{bws} (yet cf.  Remark 1.2.3(4) of \cite{bonspkar}, pay attention to Remark \ref{rstws}(2) above,  and see \cite{bvheart} for some more detail on assertion \ref{itstable}).

Assertion \ref{ineg} follows from the orthogonality axiom (iii) 
 in Definition \ref{dwstr} immediately. 

Assertion \ref{igen} is given by Corollary 2.1.2 of \cite{bonspkar}.

The first implication in assertion \ref{iwex} is obvious. The converse implication easily follows from assertion \ref{igenlm}; see also Lemma 2.7.5 of \cite{bger}.
\end{proof}


Let us now recall some statements on weight-exact localizations; more information can be found in Proposition \ref{plocw} and Remark \ref{rsos}(3) below. 

\begin{pr}\label{ploc}
 Assume that $\cu$ is endowed with a bounded weight structure $w$,  $\du\subset \cu$ is a triangulated subcategory 
  strongly generated by a class $\cp\subset \cu_{w=0}$,  and $\pi$ is the localization functor $\pi:\cu\to \cu'=\cu/\du$.   

\begin{enumerate}

\item\label{iwl31} 
 Then $w$ restricts to $\du$, and there exists a bounded weight structure $w'$ on $\cu'$ such that $\pi$ is weight-exact and  the class $\cu'_{w'= 0}$ essentially equals $\pi(\cu_{w= 0})$.


\item\label{iwl4}
Moreover, for any $M\in \cu_{w\le 0}$ and  $N\in \cu_{w\ge 0}$
the homomorphism $\cu(M,N)\to \cu'(\pi(M),\pi(N))$ 
 is surjective. 

Consequently, if  
$\pi(M)\cong \pi(N')$ for some $N'\in \cu_{w=0}$ then there exists $f\in \cu(M,N')$ such that 
 $\pi(f)$ is an isomorphism.
\end{enumerate}
\end{pr}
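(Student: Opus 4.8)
The plan is to treat three things in turn --- the restriction claim, the surjectivity of Part~\ref{iwl4}, and the construction of $w'$ --- in this logical order, since the existence and, above all, the orthogonality axiom for $w'$ will rely on the surjectivity statement (so I would actually prove \ref{iwl4} before finishing \ref{iwl31}).

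First I would establish that $w$ restricts to $\du$ in the sense of Definition~\ref{dwso}(\ref{idrest}). Since $\cp\subset \cu_{w=0}$, the additive subcategory of $\cu$ generated by $\cp$ is negative by Proposition~\ref{pbw}(\ref{ineg}), so the construction underlying Proposition~\ref{pbw}(\ref{igen}) is available. Concretely, I would verify that the class of objects of $\du$ admitting a weight decomposition with both terms in $\du$ contains $\cp$, is closed under shifts, and is closed under cones (the last via the usual ``weight decomposition of an extension'' argument using the octahedral axiom); as $\du$ is strongly generated by $\cp$, this class is all of $\obj\du$, whence $w_{\du}=(\cu_{w\le 0}\cap\obj\du,\ \cu_{w\ge 0}\cap\obj\du)$ is a bounded weight structure on $\du$. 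This step is routine.

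The heart of the argument is the surjectivity in Part~\ref{iwl4}. Given $\phi\in\cu'(\pi M,\pi N)$ with $M\in\cu_{w\le 0}$, $N\in\cu_{w\ge 0}$, I would represent $\phi$ as a right fraction $\phi=\pi(t)\ob\circ\pi(f)$, i.e.\ by a diagram $M\xrightarrow{f}N'\xleftarrow{t}N$ with $D:=\co(t)\in\obj\du$, sitting in a triangle $N\xrightarrow{t}N'\to D\xrightarrow{\partial}N[1]$. The key idea is to \emph{improve the representative} so that $\co(t)\in\cu_{w\ge 1}$, using orthogonality twice. Using that $w$ restricts to $\du$, I would take a weight decomposition $t_{\le 0}D\to D\to t_{\ge 1}D$ with both terms in $\du$; since $t_{\le 0}D\in\cu_{w\le 0}$ and $N[1]\in\cu_{w\ge 1}$, orthogonality (axiom (iii), cf.\ Proposition~\ref{pbw}(\ref{iort})) forces $\cu(t_{\le 0}D,N[1])=\ns$, so $\partial$ factors through $D\to t_{\ge 1}D$; completing the resulting morphism of triangles produces a new right fraction $M\xrightarrow{f''}N''\xleftarrow{t''}N$ representing the same $\phi$ but with $\co(t'')=t_{\ge 1}D\in\cu_{w\ge 1}\cap\obj\du$. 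Now the obstruction to lifting $f''$ through $t''$ lives in $\cu(M,t_{\ge 1}D)$, which vanishes by orthogonality again (as $M\in\cu_{w\le 0}$); hence $f''=t''\circ g$ for some $g\in\cu(M,N)$ with $\pi(g)=\phi$. The ``consequently'' clause then follows by applying this to the given isomorphism $\pi(M)\cong\pi(N')$. The main obstacle is exactly this roof refinement: making the octahedral bookkeeping precise and checking that the modified fraction genuinely represents $\phi$ (which amounts to observing that the comparison map $N'\to N''$ becomes invertible in $\cu'$).

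Finally I would construct $w'$ by setting $\cu'_{w'\le 0}=\kar_{\cu'}(\pi(\cu_{w\le 0}))$ and $\cu'_{w'\ge 0}=\kar_{\cu'}(\pi(\cu_{w\ge 0}))$. Axioms (i), (ii) and boundedness are immediate from the corresponding properties of $w$ together with exactness of $\pi$ (recall $\pi$ is the identity on objects), and axiom (iv) holds because the image under $\pi$ of a $w$-weight decomposition is a $w'$-weight decomposition. The only nontrivial axiom is orthogonality (iii): it reduces to $\pi(\cu_{w\le 0})\perp\pi(\cu_{w\ge 1})$, and for $M\in\cu_{w\le 0}$, $N\in\cu_{w\ge 0}$ the surjectivity of Part~\ref{iwl4} (applied to $M$ and $N[1]\in\cu_{w\ge 0}$) exhibits $\cu'(\pi M,\pi(N[1]))$ as a quotient of $\cu(M,N[1])=\ns$, giving the vanishing. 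Weight-exactness of $\pi$ is built in, so Proposition~\ref{pbw}(\ref{iwex}) yields $\pi(\cu_{w=0})\subseteq\cu'_{w'=0}$. For the reverse essential inclusion I would take $X\in\cu'_{w'=0}$, write it as a retract of $\pi(A)$ with $A\in\cu_{w\le 0}$ and of $\pi(B)$ with $B\in\cu_{w\ge 0}$, lift the composite $\pi(A)\to X\to\pi(B)$ to some $h\in\cu(A,B)$ via Part~\ref{iwl4}, and use orthogonality to factor $h$ through its weight-zero truncation $A_0\in\cu_{w=0}$; reassembling the two retractions then exhibits $X$ as a retract of $\pi(A_0)$, so $\cu'_{w'=0}=\kar_{\cu'}(\pi(\cu_{w=0}))$, which is the asserted essential equality.
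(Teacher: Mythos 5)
Your reconstruction of part \ref{iwl4} and of most of part \ref{iwl31} is sound, and in fact more self-contained than the paper, which simply cites Proposition 3.3.3(1) of \cite{bsosnl} for the localized weight structure and Proposition 1.8(I.3) of \cite{binters} for the surjectivity. Your roof-improvement argument (replace the fraction $M\to N'\leftarrow N$ by one whose ``denominator'' has cone $t_{\ge 1}D\in\cu_{w\ge 1}\cap\obj\du$, using a weight decomposition of $D$ \emph{inside} $\du$ and orthogonality twice) is exactly the standard proof of the cited surjectivity statement, and your logical ordering --- proving \ref{iwl4} before the orthogonality axiom for $w'$ --- is legitimate, since \ref{iwl4} only concerns $\pi$, $\cu$ and $w$. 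The restriction of $w$ to $\du$ and the verification of axioms (i), (ii), (iv) and boundedness for $w'$ are also fine.

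There is, however, one genuine gap, concentrated in your last sentence: you prove $\cu'_{w'=0}=\kar_{\cu'}(\pi(\cu_{w=0}))$ and declare this to be ``the asserted essential equality,'' but the proposition claims that $\cu'_{w'=0}$ essentially equals $\pi(\cu_{w=0})$ itself --- closure under \emph{isomorphism}, not under retracts. Your argument (lifting $\pi(A)\to X\to\pi(B)$ and factoring through the weight-zero truncation $A_0$) only exhibits an arbitrary heart object $X$ as a \emph{retract} of some $\pi(A_0)$ with $A_0\in\cu_{w=0}$; it leaves entirely open whether $X\cong\pi(M'')$ for some $M''\in\cu_{w=0}$, and $\cu'$ need not be idempotent complete, so the retract cannot simply be split off. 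This missing step is the nontrivial ``weight lifting'' property, which the paper imports from Proposition 3.3.3(1) of \cite{bsosnl}; the mechanism is illustrated by Proposition \ref{plocw}(\ref{iwl312}) of the paper (lift both retraction morphisms via the full embedding $\frac{\hw}{\hw_{\du}}\to\hw'$, factor $\id_{M'}-p\circ i$ through an object of $\hw_{\du}$, and invoke weak idempotent completeness of $\hw$, Proposition \ref{pbw}(\ref{itstable})) --- and even that statement presupposes that the complementary summand is of the form $\pi(M')$ with $M'\in\cu_{w=0}$, which your construction does not supply, so the full strength of the theorem of \cite{bsosnl} is really needed. The distinction is not cosmetic for this paper: Theorem \ref{tmotw}(\ref{imotp2}) uses precisely the strong form to conclude that $\mgb(X)$ is \emph{isomorphic} to $\pi(M)$ for an honest $M\in\obj\chower$, which is then fed into the ``consequently'' clause of \ref{iwl4} to produce the morphism $h$; with only the Karoubized description of the heart, that application would break down.
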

\begin{proof}
\ref{iwl31}. Proposition \ref{pbw}(\ref{ineg}) implies that $w$ restricts to $\du$ indeed. By Proposition 3.3.3(1) of   \cite{bsosnl} 
 there exists a weight structure $w'$ on $\cu$ such that $\pi$ is weight-exact, and the class $\cu'_{w'=0}$ essentially equals $\pi(\cu_{w= 0})$. Lastly, Proposition \ref{pbw}(\ref{igenlm}) implies that $w'$ is bounded; see also Proposition 8.1.1(3) of \cite{bws}.

\ref{iwl4}.  Proposition 1.8(I.3) of \cite{binters} 
 gives the first part of the assertion.

Next, if $\pi(M)\cong \pi(N')$ for  $N'\in \cu_{w=0}$ then $N'$ also belongs to $\cu_{w\ge 0}$. Thus we can apply the first part 
 for $N'=N$ to lift the isomorphism $\pi(M)\to \pi(N')$ to a $\cu$-morphism $f$. 
\end{proof}

\subsection{On (strong) weight complexes, pure functors, and  Grothendieck groups of 
  weighted categories}\label{stable}

Now we recall the theory of so-called "strong" weight complex functors. Note here that this version of the theory is less general than the "weak" one that we will recall in 
 \S\ref{sbr} below. 
 However, both versions of the theory are fine for the  purposes of the current paper, and we start from the strong one that it easier to describe.

\begin{pr}\label{pwt}
 Assume that $\cu$ possesses an $\infty$-enhancement (see \S1.1 of \cite{sosnwc} for the corresponding references), and is endowed with a bounded weight structure $w$.

Then there exists an exact functor $t^{st}:\cu\to K^b(\hw)$, $M\mapsto (M^i),$ that enjoys the following properties. 

\begin{enumerate}
\item\label{iwcbase} The composition of the embedding $\hw\to \cu$ with $t^{st}$ is isomorphic to the obvious embedding $\hw \to K^b(\hw)$.


 \item\label{iwcfunct} Let $\cu'$ be a triangulated category that possesses an $\infty$-enhancement as well and is endowed with a bounded weight structure $w'$; let  $F:\cu\to \cu'$ be a weight-exact functor that lifts to $\infty$-enhancements. Then the composition $t'{}^{st}\circ F$ is isomorphic to $K^b(\hf)\circ t{}^{st}$, where 
$t'{}^{st}$ is the weight complex functor corresponding 
 $w'$, and the functor $K^b(\hf):K^b(\hw)\to K^b(\hw')$ is the obvious $K^b(-)$-version of the restriction $\hf:\hw\to \hw'$.

Moreover, the obvious modification of this statement 
 corresponding to contravariant weight-exact functors (cf. Proposition \ref{pbw}(\ref{idual})) is valid as well.


\item\label{iwcons} If $M\in \cu_{w\le n}$ (resp. $M\in \cu_{w\ge n}$) then $t^{st}(M)$ belongs to $K(\hw)_{\wstu\le n}$ (resp. to $K(\hw)_{\wstu\ge n}$).

\item\label{iwcpu} Assume that $\ca$ is an additive covariant (resp., contravariant) functor from $\hw$ into an abelian category $\au$. Then the functor $H^{\ca}$ (resp. $H_{\ca}$) that sends $M$ into the zeroth homology of the complex $\ca(M^i)$ (resp. $\ca(M^{-i})$)  is (co)homological. Moreover, this is the only  (co)homological functor (up to a unique isomorphism) whose restriction to $\hw$  equals $\ca$ and whose restrictions to $\hw[i]$ for $i\neq 0$ vanish. 
\end{enumerate}
\end{pr}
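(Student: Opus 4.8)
The plan is to construct the functor $t^{st}$ abstractly from the $\infty$-enhancement, then verify the four listed properties. The key input is the $\infty$-enhancement: a bounded weight structure on the homotopy category lifts to a filtered/weight-decomposition structure on the enhancing stable $\infty$-category, and by choosing functorial weight decompositions (Postnikov-type towers associated to $w$) one obtains, for each object $M$, a diagram whose ``graded pieces'' lie in $\hw$. Concretely, I would iterate the weight decomposition triangles $L_iM\to M\to R_iM\to L_iM[1]$ with $L_iM\in\cu_{w\le i}$, $R_iM\in\cu_{w\ge i+1}$; the successive cones of the $L_iM$ (the ``$w$-factors'') land in $\cu_{w=i}=\hw[i]$, and assembling them gives a complex $(M^i)$ over $\hw$. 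The enhancement is exactly what makes these choices coherent enough to be functorial and exact; without it one only gets a functor to $K^b(\hw)$ up to the well-known sign and higher-coherence ambiguities. So the first step is: invoke the enhancement to produce an exact $t^{st}:\cu\to K^b(\hw)$, citing \cite{sosnwc}.

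Granting the construction, properties \ref{iwcbase} and \ref{iwcons} are essentially built into it. For \ref{iwcbase}, if $M\in\cu_{w=0}$ then the weight decompositions are trivial ($L_0M\cong M$, all other factors vanish), so $t^{st}(M)$ is $M$ in degree $0$; naturality gives the claimed isomorphism with the tautological embedding $\hw\to K^b(\hw)$. For \ref{iwcons}, the construction places the $w$-factor of $M$ in homological degree $i$ precisely when it lies in $\cu_{w=i}$, so the boundedness statements $M\in\cu_{w\le n}$ (resp. $\ge n$) translate directly into vanishing of $M^i$ for $i>n$ (resp. $i<n$) up to homotopy, which is exactly membership in $K(\hw)_{\wstu\le n}$ (resp. $\wstu\ge n$); here I use Remark \ref{rstws}(1) identifying the stupid-filtration weight structure on $K(\hw)$.

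For \ref{iwcfunct}, the point is that a weight-exact $F$ lifting to enhancements commutes with the chosen weight decompositions up to coherent homotopy, hence carries the tower defining $t^{st}(M)$ to the tower defining $t'^{st}(F(M))$; since $F$ sends $\cu_{w=0}$ into $\cu'_{w'=0}$ by Proposition \ref{pbw}(\ref{iwex}), its restriction is $\hf:\hw\to\hw'$, and applying it termwise is $K^b(\hf)$. The contravariant variant follows by passing to $\cu^{op}$ with the opposite weight structure, using Proposition \ref{pbw}(\ref{idual}). Finally, for \ref{iwcpu} I would define $H^{\ca}(M)=H_0(\ca(M^\bullet))$ and check it is (co)homological: a distinguished triangle in $\cu$ maps under $t^{st}$ to a triangle in $K^b(\hw)$, i.e. (up to homotopy) to a short exact sequence of complexes, and the long exact homology sequence of $\ca$ applied termwise yields the required long exact sequence. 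The normalization/uniqueness clause I would prove by a standard induction on weights: any (co)homological $G$ agreeing with $\ca$ on $\hw$ and vanishing on $\hw[i]$, $i\ne 0$, is forced on each $\cu_{w=i}$ and then, via the weight decomposition triangles and the long exact sequences, on all of $\cu^b=\cu$, with the comparison isomorphism determined uniquely.

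The main obstacle I expect is \ref{iwcfunct}: making the compatibility of $t^{st}$ with $F$ a genuine \emph{isomorphism of functors} rather than a mere objectwise agreement requires controlling higher homotopy coherences of the weight towers, which is exactly where the strong ($\infty$-enhanced) theory does real work and where the ``weak'' theory would fall short. The honest route is to lift everything to the enhancing stable $\infty$-categories, express both compositions as outputs of the same functorial Postnikov construction, and deduce the comparison from functoriality there; I would lean on the cited results of \cite{sosnwc} for this rather than reprove the coherence by hand.
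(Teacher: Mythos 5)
Your overall route is the paper's route: for existence, exactness, and the functoriality assertion (\ref{iwcfunct}) the paper likewise does no construction by hand but defers everything to Corollary 3.5 of \cite{sosnwc} (with the contravariant case obtained exactly as you say, from the essential self-duality of that argument, cf.\ Proposition \ref{pbw}(\ref{idual})), so your decision to ``lean on the cited results rather than reprove the coherence'' matches the actual proof. The differences are in assertions (\ref{iwcons}) and (\ref{iwcpu}), which you verify directly from the tower construction while the paper cites \cite{bwcp}. For (\ref{iwcons}) your argument is fine in spirit, but note a tacit step: $\tst$ is a \emph{fixed} functor, so its value on $M$ must be compared with the complex computed from a conveniently chosen weight Postnikov tower. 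That comparison is precisely the compatibility of $\tst$ with the weak weight complex functor $t:\cu\to\kw(\hw)$, which is why the paper routes this assertion through Remark 1.3.5(3) and Proposition 1.3.4(10) of \cite{bwcp} rather than arguing ``by construction''; your version is repairable but silently uses this compatibility.

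The one genuine soft spot is the uniqueness clause of (\ref{iwcpu}), which the paper explicitly calls the only non-trivial part of that assertion and settles by citing Theorem 2.1.2(2,3) of \cite{bwcp}. Your ``standard induction on weights'' does determine the \emph{value} of any such functor $G$ on each object of $\cu^b=\cu$, but it does not by itself produce the natural comparison isomorphism $G\cong H^{\ca}$, let alone its uniqueness: weight decompositions and weight Postnikov towers are not functorial, so an objectwise induction gives no canonical choice of isomorphism and no naturality squares. The repair is exactly the weak weight complex machinery recalled in Proposition \ref{pwtw}: morphisms of $\cu$ lift to morphisms of towers, the lift is unique up to weak homotopy, and weakly homotopic lifts induce equal maps on $H_0(\ca(-))$ (Proposition \ref{pwtw}(I.\ref{iwhefu})); this is what makes the comparison canonical and is the content of the cited theorem of \cite{bwcp}. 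So: same approach as the paper for the hard coherence, a correct direct check where the paper cites, and one clause where your sketch underestimates the difficulty in a way you should fix by invoking (or reproving) the weak-homotopy uniqueness statement.
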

\begin{proof}

Assertions \ref{iwcbase} and \ref{iwcfunct} easily  follow from  Corollary 3.5 of \cite{sosnwc} (along with its proof which is essentially self-dual); 
  see also  \S6.3 of \cite{bws} for the case where $\cu$ possesses a differential graded enhancement (and that is sufficient for our purposes below). 
	
	The easy assertion \ref{iwcons} is given by Proposition 1.3.4(10) of \cite{bwcp} (note however that to apply the results of ibid. one should recall that $\tst$ is compatible with the {\it weak} weight complex functor as defined in loc. cit.; see Remark 1.3.5(3) of ibid. and Corollary \cite{sosnwc}  for this statement, whereas the "weak" theory itself is recalled in Proposition \ref{pwtw} below).
	
	The only non-trivial statement in assertion \ref{iwcpu} is the uniqueness one, that is given by Theorem 2.1.2(2,3)  of  \cite{bwcp}.
	\end{proof}

\begin{rema}\label{rwc}
The term "weight complex" originates from \cite{gs}; yet the domains of the  ("concrete") weight complex functors considered in that paper were not triangulated. 
\end{rema}


Now let us relate weight complexes to certain Grothendieck groups.

\begin{defi}\label{dk0}
Let $\bu$ be an essentially small additive category, whereas $\cu$ is an essentially small triangulated category.

1. Then the (split) Grothendieck group $K_0^{\add}(\bu)$ of  $\bu$ is the abelian group whose generators are 
the isomorphism classes of objects of $\bu$, and the relations are of the form $[B]=[A]+[C]$ for all   $A, B,C\in\obj \bu$ such that $B\cong A\bigoplus C$. 

2. 
 The Grothendieck group $K_0^{\tr}(\cu)$  is the abelian group whose generators are the isomorphism classes of objects of $\cu$, and the relations are of the form $[B]=[A]+[C]$, where  $A\to B\to C\to B[1]$ is a $\cu$-distinguished triangle. 
\end{defi}

Now we will formulate a rather general statement on Grothendieck groups; we will generalize it in Theorem \ref{teulerw} below.

\begin{theo}\label{teuler}
Adopt the assumptions of Proposition \ref{pwt}; assume 
$N$ is an object of $\cu$ and $F:\hw\to \au$ is an additive functor (consequently, $\au$ is additive as well).

 1. Then the correspondence $M\mapsto \sum_{i\in \z} (-1)^i[F(M^i)]$ (where $t^{st}(M)=(M^i)$) gives a well-defined homomorphism $F_{K_0}:K_0^{\tr}(\cu)\to K_0^{\add}(\au)$. 

2. Assume that 
  $\au$ is an abelian category, and there exists a $\cu$-morphism $h$ either from $N'$ to  $N$ or vice versa such that $F_{K_0}([\co(h)])=0$ and $N'\in \cu_{w=0}$. Then $F_{K_0}([N])=[F(N')]$. 

Moreover, if $H_j^F(\co(h))=0$ for 
$j=0,1$ and the homological functor $H^F$ given by  Proposition \ref{pwt}(\ref{iwcpu}), then 
$F_{K_0}([N])=[H^{F}(N)]$. 

3. Assume that $\au$ is abelian semi-simple. Then $F_{K_0}(N)=\sum_{i\in \z} (-1)^i[H^F_i(N)]$.

4. Assume that $F=\id_{\hw}$. Then the corresponding homomorphism $\id_{\hw,K_0}: K_0^{\tr}(\cu)\to K_0^{\add}(\hw)$ is an isomorphism.
\end{theo}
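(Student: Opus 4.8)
The plan is to prove the four assertions in order, with the substantive work residing in parts 1, 2, and 4.

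For part~1, I would first check well-definedness of the correspondence $M\mapsto\sum_{i}(-1)^i[F(M^i)]$ as a function on objects. The weight complex $t^{st}(M)=(M^i)$ is only canonical up to homotopy equivalence in $K^b(\hw)$, so I must verify that two homotopy-equivalent complexes over $\hw$ yield the same alternating sum in $K_0^{\add}(\au)$. A homotopy equivalence between bounded complexes over an additive category can be realized, after adding contractible complexes, as an isomorphism; a contractible bounded complex over $\hw$ is a direct sum of two-term complexes $B\xrightarrow{\id}B$, and applying $F$ termwise and taking the alternating sum kills such a summand since $F$ is additive. Hence the alternating Euler characteristic in $K_0^{\add}(\au)$ is a homotopy invariant, so the correspondence descends to a well-defined map on isomorphism classes in $\cu$. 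Next I would check that it respects the triangulated relations defining $K_0^{\tr}(\cu)$: given a distinguished triangle $A\to B\to C\to A[1]$, the exactness of $t^{st}$ (Proposition~\ref{pwt}) produces a corresponding distinguished triangle in $K^b(\hw)$, and by the standard fact that a distinguished triangle in a homotopy category of complexes gives, up to homotopy, a short exact sequence of complexes (equivalently, a cone), the weight complex of $B$ is homotopy equivalent to the total complex / cone built from those of $A$ and $C$. The additivity of termwise alternating sums under cones then gives $[B]=[A]+[C]$ under $F_{K_0}$. This finishes part~1; the only mild subtlety is phrasing the cone-to-Euler-characteristic computation cleanly.

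For part~4, I would exhibit an explicit inverse to $\id_{\hw,K_0}$. The natural candidate is the map $K_0^{\add}(\hw)\to K_0^{\tr}(\cu)$ induced by the embedding $\hw\hookrightarrow\cu$, sending $[B]\mapsto[B]$; this is well-defined because a direct sum decomposition $B\cong A\oplus C$ in $\hw$ gives a (split) distinguished triangle $A\to B\to C\to A[1]$ in $\cu$. One composition is immediate from Proposition~\ref{pwt}(\ref{iwcbase}): for $B\in\cu_{w=0}$ the weight complex is $B$ concentrated in degree~$0$, so $\id_{\hw,K_0}([B])=[B]$. For the other composition I must show that the embedding-induced map is surjective and that $\id_{\hw,K_0}$ followed by it is the identity on $K_0^{\tr}(\cu)$; surjectivity onto $K_0^{\tr}(\cu)$ uses boundedness, since Proposition~\ref{pbw}(\ref{igenlm}) says $\cu=\cu^b$ is strongly generated by $\cu_{w=0}$, so every class in $K_0^{\tr}(\cu)$ is an integer combination of classes of objects of the heart (one inducts on weight-range length using weight decompositions, each of which is a distinguished triangle expressing $[M]$ in terms of a heart piece and a shorter object). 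The equality of the round-trip composition with the identity then follows by comparing both expressions on generators $[B]$, $B\in\cu_{w=0}$.

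Parts~2 and~3 are then deductions. For part~2, additivity of $F_{K_0}$ applied to the triangle on $h$ gives $F_{K_0}([N])=F_{K_0}([N'])\pm F_{K_0}([\co(h)])=[F(N')]$, using $F_{K_0}([\co(h)])=0$ and the fact that $N'\in\cu_{w=0}$ has one-term weight complex so $F_{K_0}([N'])=[F(N')]$; the ``moreover'' clause replaces $[F(N')]$ by $[H^F(N)]$ via the vanishing of $H_0^F,H_1^F$ on the cone together with the long exact sequence for the cohomological/homological functor $H^F$ from Proposition~\ref{pwt}(\ref{iwcpu}). For part~3, in a semisimple abelian $\au$ every object is determined in $K_0^{\add}(\au)=K_0^{\tr}$-style by its class, and the alternating sum $\sum_i(-1)^i[F(M^i)]$ computes the same class as $\sum_i(-1)^i[H_i^F(N)]$ because over a semisimple category each term of a complex splits as cycles plus a complement and the homology accounts exactly for the alternating sum (the Euler characteristic of a complex equals that of its homology). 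The main obstacle I anticipate is the careful handling of the homotopy/cone step in part~1 and the inductive surjectivity argument in part~4; these are where one must be precise rather than merely formal, though both rely only on results already available in the excerpt.
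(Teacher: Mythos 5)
Your proposal is correct, and for parts 2--4 it follows the paper's argument essentially verbatim (part 2 via additivity on the triangle of $h$ plus the long exact sequence for $H^F$; part 4 via the embedding-induced map $K_0^{\add}(\hw)\to K_0^{\tr}(\cu)$, surjective by Proposition \ref{pbw}(\ref{igenlm}), with the one-term weight complex of heart objects giving the left inverse). Where you genuinely diverge is part 1: the paper's proof of Theorem \ref{teuler}(1) is a two-line reduction, noting that $K^b(F)\circ t^{st}$ is an exact functor $\cu\to K^b(\au)$ and then quoting Theorem 1 of \cite{rose} for $K_0^{\tr}(K^b(\au))\cong K_0^{\add}(\au)$, whereas you prove homotopy invariance and cone-additivity of the alternating sum by hand. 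This is a legitimate alternative, and in fact it is precisely the strategy the paper itself uses for the generalization in Theorem \ref{teulerw}(I.1), where no citation to \cite{rose} would suffice; so your route buys self-containedness and generalizes to unbounded $w$ and subcategories $\bu\subset\hw$, at the cost of redoing what Rose's theorem packages.

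One step in your part 1 needs repair as stated: you assert that a contractible bounded complex over an arbitrary additive category is a direct sum of two-term complexes $B\stackrel{\id}{\to}B$. This is false in general --- in the category of finitely generated free modules over a ring admitting stably free non-free modules, a bounded split-exact complex need not decompose this way, since the relevant images are not objects of the category. The claim does hold for $\hw$, because hearts of weight structures are weakly idempotent complete (Proposition \ref{pbw}(\ref{itstable})): then the top differential of a contractible bounded complex is a split epimorphism admitting a kernel, and one splits off identity summands by descending induction. Alternatively you can avoid weak idempotent completeness entirely, as the proof of Theorem \ref{teulerw}(I.1) does, by extending $F$ to $\kar(F):\kar(\hw)\to\kar(\au)$, splitting the contractible cone in $\kar(\hw)$ via Proposition 10.9 of \cite{buhler}, and observing that the even-degree and odd-degree direct sums of its terms are already isomorphic in $\au$, which kills its class in $K_0^{\add}(\au)$. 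With either fix inserted, your argument is complete.
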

\begin{proof}
1. Obviously $t\circ K^b(F)$ gives a well-defined homomorphism $K_0^{\tr}(\cu)\to K_0^{\tr}(K^b(\au))$. It remains to recall that 
$K_0^{\tr}(K^b(\au))\cong K_0^{\add}(\au)$ by Theorem 1 of \cite{rose}; see also 
 Theorem \ref{teulerw}(II.1) and Remark \ref{reulerw}(1) below.

2. Immediately  from the definition of  $K_0^{\tr}(\cu')$, we have $F_{K_0}([N])=F_{K_0}([N'])$. Next, 
  assertion 1 implies that $F_{K_0}([N])=[H^F(N')]=F_{K_0}([N'])$.

Lastly, if $H^F(\co(h))=0=H^F(\co(h)[-1])$ then we have 
 $H^F(N')=H^F(N)$,  since the functor $H^F$ is homological.

3. This statement is an immediate consequence of our definitions. 

4.  
 The embedding $\hw\to \cu$ obviously gives a homomorphism  $K_0^{\add}(\bu)\to K_0^{\tr}(\cu)$; it is surjective since $\hw$ strongly generates $\cu$ according to Proposition \ref{pbw}(\ref{igenlm}).  Hence $\id_{\hw,K_0}$ is an isomorphism, since its  composition with 
  $F_{\id}$ obviously equals $\id_{K_0^{\add}(\bu)}$. 
\end{proof}

\begin{rema}\label{reuler}
Assume that we have a "tensor product" bi-functor $\nu:\cu\times \cu\to \cu$ that becomes and exact functor if one of the arguments is fixed. It clearly defines a bi-additive operation $K_0^{\tr}(\cu)\times K_0^{\tr}(\cu)\to K_0^{\tr}(\cu)$. 

Next, if $\mu$ restricts to a bi-functor $\hw\times \hw\to \hw$, and the latter is compatible (via $F$) to a certain bi-additive functor $\au\times\au\to \au$ then the homomorphism $F_{K_0}$ clearly becomes a ring one (with respect to the corresponding operations).

2. The idea for parts 2 and  3 
 of our theorem is that in some cases the value of 	$F_{K_0}([M])$ can be computed by means of the homological functor $H^{F}$. We will discuss concrete examples of these calculations in  Theorem \ref{tceked} and 
  Remark \ref{reked}(\ref{iur}) below. 
\end{rema}

\section{Applications to motives and unramified cohomology}\label{smotchow} 

 Now we apply the results of previous sections to motives, Chow weight structures, and  unramified cohomology. 

In \S\ref{sbmot}  we recall some facts on  Voevodsky's effective motivic categories $\dmger\subset \dmer$ 
 and sheaves with transfers. 

In \S\ref{smotw} we 
 define the weight structure $\wchowsg$ on $\dmger$ and relate the corresponding pure functors to unramified cohomology. In particular, we study the unramified Brauer group sheaf with transfers. 

In \S\ref{smgc} we recall some properties of motives with compact support that demonstrate that applying functors of the sort $F_{K_0}$ (see Theorem \ref{teuler}(2)) to $\mgcr(-)$ 
 yields  (Euler characteristic) homomorphisms from 
  the Grothendieck group of varieties. 
 Next we apply our results in the case where $F$ is given by $\zll$-adic \'etale cohomology. 
 These statements are closely related to Theorem 5.1 of \cite{eked}.

\subsection{
On Voevodsky motives 
and sheaves with transfers}
\label{sbmot}

First we introduce some notation and recall some basics on Voevodsky motives.

 $k$ will be our perfect base field of characteristic $p$  ($p$ may be zero). 
 We introduce the following 
 convention: in the case $p=0$ the notation $\zop$ will 
 mean just the ring $\z$.

We will write $R$ for the coefficient ring of our motivic categories; respectively, $R$ is a unital commutative associative ring. The reader may assume that $R=\zop$ since this case is the most interesting one for the results of this paper.
On the other hand, when we will write $1/p\notin R$ we will always assume (in addition) that $p>0$.

 $\var\supset \sv\supset \spv$ will denote the set of all  (not necessarily connected) varieties over $k$, resp. of smooth varieties, resp. of smooth projective varieties. 
The category of smooth $k$-varieties will be denoted by $\sm$.


We will not define any categories of motives in this paper.  The reader can find these definitions in \cite{bev} and \cite{deg}; cf. also  \cite{1},  \cite{vbook}, and \cite{bokum}. 
 Instead,  we list some 
 well-known properties of motives (that mostly originate from \cite{1})  and introduce some notation.

\begin{pr}\label{prmot}
Let $X,Y\in \sv$.

\begin{enumerate}
\item\label{imot1}
The category $\dmer$ of (unbounded) $R$-linear motivic complexes is triangulated, $R$-linear, and  equipped with a functor $\mgr$ (of the $R$-linear motif) from $\sm$ into it. Moreover, $\mgr(\afo\times X\to X)$ is an isomorphism. 

We will write $\dmger$ for the subcategory of $\dmer$ densely generated by $\mgr(\sv)$; it is essentially small.  Moreover, we will write $\sm'$ for the full subcategory  of $\dmger\subset \dmer$ whose object class is $\mgr(\sv)$.

\item\label{imot3} $\dmger$ is a symmetric monoidal triangulated category. Moreover, the functor $\mgr$ converts products of varieties into tensor products, and sends disjoint unions into direct sums


\item\label{imot4} Let us write $\pt$ for $\spe k$, $R$ for $\mgr(\pt)$, and $R\lan 1 \ra$ for the "complement" of $R$ to $\mgr(\p^1)$ corresponding to the morphisms $\pt\to \p^1\to \pt$ (here we send $\pt$ into $0\in \p^1(k)$).

Then the functor $-\lan 1\ra=-\otimes R \lan 1\ra$ is  fully faithful on $\dmger$ (and on $\dmer$ as well). 

\item\label{imot5} Let $Z\in \sv$  be an equicodimensional closed subvariety of codimension  $n$ in 
 $X$  
(i.e.,  all of the connected components of $Z$ are of the same codimension  $n\ge 0$ in  $X$).
Then there exists a Gysin distinguished triangle 
$$\mgr(X\setminus Z) \to \mgr(X) \to\mgr(Z)\lan n \ra \to \mgr(X\setminus Z)[1]$$
in $\dmger$, where $-\lan n\ra$ is the $n$th iteration of $-\lan 1\ra$.

\item\label{imot6} Let us write $\hir$ for the {\it heart of the homotopy $t$-structure $\thomr$ on $\dmer$} (see \S\ref{sbr} below); consequently, $\hir$ is a full abelian subcategory of $\dmer$. We will call  the   objects of $\hir$ (Nisnevich homotopy invariant $R$-liear) {\it sheaves with transfers}.

 Then the correspondence sending an 
 a sheaf with transfers $N$ into the functor $\dmer (\mgr(-),N):\sm\opp\to \ab$ 
  gives 
	 a faithful exact functor $\for$ from $\hir$ into the abelian category of Nisnevich sheaves on $\sm$ (however, it will not probably cause any misunderstanding if one will write $N(X)$ instead of $\dmer (\mgr(X),N) $).

\item\label{imot9} Furthermore,	for any $N\in \obj \hir$ and $i\in \z$ the group $\dmer(\mgr(X),N[i])$ is naturally isomorphic to  $ H^i_{Nis}(X,\for(N))$ (i.e., to the $i$th Nisnevich 
 cohomology of  $X$ with coefficients in $\for(N)$; this is certainly zero if $i<0$).\footnote{Actually, these Nisnevich cohomology groups are canonically isomorphic to the corresponding Zariski ones.}

\item\label{imot9p}
Moreover, for any dense embedding $f$ of smooth $k$-varieties the homomorphism $\for(N)(f)$ is injective.

\item\label{imot7} The subcategory $\chower$ whose objects are the $\dmer$-retracts of elements of $\mgr(\spv)$ is negative in $\dmger$, and it is naturally equivalent to the category of $R$-linear effective Chow motives.

Moreover, if $R$ is a $\zop$-algebra then  $\chower$  densely generates $\dmger$ (cf. Theorem \ref{tmotw}(\ref{imotp1}) below). 

\item\label{imotet} Assume that 
$R$ is a localization of $\zop$. 
 Then there exists an object $\rgm$ of $\dmer$ such that the functors $\dmer(\mgr(-),\rgm[i])$ give 
 the \'etale cohomology $H^i_{et}(-,G_m\otimes R)$  (here $G_m$ is the unit group sheaf) for all $i\ge 0$. Moreover,   for  $S^{\bra}=H_{-2}^{\thomr}(\rgm)=H_0^{\thomr}(\rgm[2])$ (see Definition \ref{dtstrh} below) the restriction of $\dmer(-,S^{\bra})$ to the category $\sm'\subset \dmer$ (see assertion \ref{imot1}) 
 is isomorphic to the same restriction of $\dmer(\mgr(-),\rgm[2])$, and
 the sheaf $\for(S^{\bra})$ 
 is the  {\it $R$-linear cohomological Brauer group} functor $\bra:X\mapsto H^2_{et}(X,G_m\otimes R)$. 

\item\label{imotr} Assume that $f: Y\to X$ is a finite flat 
  $\sm$-morphism. Then the morphism $\operatorname{deg}(f)\cdot\id_{\mgr(X)}$ factors through the motif $\mgr(Y)$. 
 \end{enumerate}
 \end{pr}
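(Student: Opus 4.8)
The plan is to exhibit the required factorization already at the level of finite correspondences, through which Voevodsky's functor $\mgr$ factors (see \cite{1}). Recall that for smooth $X,Y$ any element of the group $c(Y,X)$ of finite correspondences --- cycles on $Y\times X$ that are finite over $Y$ and dominant over its components --- induces a morphism $\mgr(Y)\to\mgr(X)$, and that $\mgr(f)$ is the morphism attached to the graph cycle $\Gamma_f\subset Y\times X$.

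First I would construct a transfer morphism $\tr_f\colon\mgr(X)\to\mgr(Y)$. Since $f$ is finite, the transpose ${}^t\Gamma_f\subset X\times Y$ of the graph (the cycle supported on $\{(f(y),y)\}$) is finite over the first factor $X$; hence it defines an element of $c(X,Y)$ and thus a morphism $\tr_f$. Here we may assume $X$ connected, so that $\operatorname{deg}(f)$ is a well-defined integer $d$; the general case is handled one component at a time.

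The crux is the computation of the composite correspondence $\Gamma_f\circ{}^t\Gamma_f\in c(X,X)$, i.e. the correspondence underlying $\mgr(f)\circ\tr_f$. In the triple product $X\times Y\times X$ the relevant intersection is supported on the image of $Y$ under $y\mapsto(f(y),y,f(y))$; its projection to $X\times X$ lands on the diagonal $\Delta_X$, and the induced map $Y\to\Delta_X\cong X$ is just $f$, of degree $d$. Using the flatness of $f$ to ensure a proper intersection and to read off the cycle-theoretic multiplicities, I expect to obtain $\Gamma_f\circ{}^t\Gamma_f=d\cdot\Delta_X$, the diagonal being the identity correspondence of $X$. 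This is the motivic incarnation of the classical identity $f_*f^*=\operatorname{deg}(f)\cdot\id$.

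Applying $\mgr$ and passing to $R$-coefficients then gives $\mgr(f)\circ\tr_f=d\cdot\id_{\mgr(X)}$, which is precisely the asserted factorization of $\operatorname{deg}(f)\cdot\id_{\mgr(X)}$ through $\mgr(Y)$. The main obstacle is the multiplicity bookkeeping in the third step: one must verify that flatness makes the composite a single copy of the diagonal occurring with multiplicity exactly $d$, with no excess contributions. This is exactly where flatness (rather than mere finiteness, which only suffices to define $\tr_f$) enters.
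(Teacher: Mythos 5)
Your proposal addresses only assertion (\ref{imotr}) of the proposition. The other nine assertions are disposed of in the paper by citations to the literature (with the computation of $S^{\bra}$ in assertion (\ref{imotet}) postponed to \S\ref{sbr}), and you do not mention them at all; so as a proof of the full statement the proposal is incomplete. For assertion (\ref{imotr}) itself, however, your argument is correct and is essentially the paper's own: the paper's proof of that item is a citation of Lemma 2.3.5 of \cite{2} together with the proof of \cite[Corollary 1.2.2(1)]{bzp}, and the mathematical content behind that citation is precisely your computation --- the transfer is the finite correspondence given by the transpose graph (equivalently, the fundamental cycle of $Y$ viewed as a relative cycle over $X$, which coincides with ${}^t\Gamma_f$ since $Y$ is reduced), and $\Gamma_f\circ{}^t\Gamma_f=\operatorname{deg}(f)\cdot\Delta_X$.

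Two small corrections to your bookkeeping, neither of which breaks the argument. First, the properness of the intersection in $X\times Y\times X$ follows from finiteness alone, not flatness: the support is the isomorphic image of $Y$ under $y\mapsto(f(y),y,f(y))$, hence automatically of the expected dimension. Second, the multiplicities are $1$ not because of flatness but because the scheme-theoretic intersection is isomorphic to the smooth scheme $Y$: the graph $\Gamma_f$ is a section of the smooth projection $Y\times X\to Y$, hence cut out by a regular sequence, which gives the vanishing of the higher Tor terms in Serre's formula. Where flatness genuinely enters is (a) in guaranteeing that each component of the support of ${}^t\Gamma_f$ surjects onto a connected component of $X$ (so that ${}^t\Gamma_f\in c(X,Y)$) and that $\operatorname{deg}(f)$ is locally constant, and (b) in the final pushforward, where $p_{13}$ restricted to the support is $\Delta_X\circ f$ and $f_*[Y]=\operatorname{deg}(f)[X]$; note that this identity also holds for inseparable $f$, the degree being the full degree of the extension of function fields, which matters in the positive-characteristic setting of the paper. (In fact, a finite surjective morphism of smooth varieties is automatically flat by miracle flatness, so the flatness hypothesis is mild.) With these adjustments your sketch is a complete proof of assertion (\ref{imotr}), matching the paper's route.
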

\begin{proof}
 \S2.3, \S4.4,  Theorem 3.3,  and  Proposition 6.3.1  of \cite{bev}, and \S1.3 and Lemma 1.1.1 of \cite{bokum}
give assertions \ref{imot1}--\ref{imot9}.

 Next, assertion \ref{imot9p}  immediately follows from the existence of Gersten resolutions for objects of $\hir$; see Theorem  24.11 of \cite{vbook}.

The first part of assertion \ref{imot7}  immediately follows from  Corollary 6.7.3  of \cite{bev}.
Next, the category $\chower$ densely generates $\dmger$ if $R$ is a $\zop$-algebra since this statement is true in the case $R=\zop$ (here one can apply Proposition 1.3.3 of \cite{bokum}), whereas in this case the statement is given by Proposition 5.3.3 of \cite{kellyast} and also by Theorem 2.2.1(1) of  \cite{bzp} (cf. 
 Corollary 3.5.5 of \cite{1}). 

\ref{imotet}. The existence of $\rgm$  is well-known  (as well) and easily follows from 
 Example 6.22 of \cite{vbook}; see also the introduction of \cite{kahnr}. 

The calculation of $S^{\bra}$ is mentioned in Lemma 5.2 of ibid. We will give some detail for the proof of this statement and compare the restrictions of $\dmer(-,S^{\bra})$ and $\dmer(\mgr(-),\rgm[2])$ to $\sm'$ in \S\ref{sbr} below.

Lastly, assertion \ref{imotr} easily follows from Lemma 2.3.5 of \cite{2}; cf. the proof  of \cite[Corollary 1.2.2(1)]{bzp}.\footnote{
I am deeply grateful to  prof. D.-Ch. Cisinski for teaching me this argument.}
 \end{proof}

\subsection{On Chow weight structures and unramified cohomology}\label{smotw}

Starting from this moment we will assume that $R$ is a $\zop$-algebra.

\begin{theo}\label{tmotw}

Assume that  $X\in \sv$. Then the following statements are valid.

\begin{enumerate}

\item\label{imotp1} The subcategory $\chower$ of $\dmger$ actually strongly generates $\dmger$. Moreover, there exists  a bounded weight structure $\wchowsg$ on $\dmger$ whose heart equals $\chower$, and  $\mgr(X)\in \dmger_{\wchowsg\le 0}$.

Furthermore, 
 if  $X$ is of dimension $n>0$ then $\mgr(X)$ belongs to the subcategory of $\dmger$ densely generated by motives of projective varieties of dimension at most $n$. 

\item\label{imotw2} 
 The twist functor $-\lan 1\ra$ is weight-exact with respect to $\wchowsg$. Consequently, $\wchowsg$ restricts to the subcategory $\dmger\lan 1\ra$ (see Proposition \ref{prmot}(\ref{imot4})),  and there exists  a weight structure $\wgbirp$ on  
 the localization $ \dmgbirp=\dmger/\dmger\lan 1 \ra$ such that the localization functor $\pi:\dmer\to \dmgbirp$ is weight-exact.  

\item\label{imotw3}  The weight structure $\wgbirp$  extends (see Definition \ref{dwso}(\ref{idrest})) to a bounded weight structure $\wgbir$ on  the category $\dmgbir=\kar(\dmgbirp)$ whose heart $\hwgbir=\chowbir$  
  consists of the retracts of elements of $\mgb(\sv)$, where  $\mgb=\pi\circ \mgr$.

\item\label{imotp2} If $X\in \sv$ then there exists $M\in \obj \chower=\dmger{}_{\wchowsg=0}$ and $h\in \dmger(\mgr(X),M)$ such that $\pi(h)$ is an isomorphism.

\end{enumerate}
 \end{theo}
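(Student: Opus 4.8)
The plan is to deduce this assertion directly from the surjectivity statement of Proposition \ref{ploc}(\ref{iwl4}), applied to the localization functor $\pi\colon\dmger\to\dmgbirp$. First I would verify that the hypotheses of Proposition \ref{ploc} hold for $\cu=\dmger$ equipped with the bounded weight structure $\wchowsg$ and $\du=\dmger\lan 1\ra$. The point is that $\du$ is strongly generated by a subclass of the heart: since $\chower$ strongly generates $\dmger$ by assertion \ref{imotp1}, its image under the exact functor $-\lan 1\ra$ strongly generates $\dmger\lan 1\ra$; and as $-\lan 1\ra$ is weight-exact (assertion \ref{imotw2}), Proposition \ref{pbw}(\ref{iwex}) shows $\chower\lan 1\ra\subset \dmger_{\wchowsg=0}=\chower$. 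Hence $\du$ is strongly generated by the class $\chower\lan 1\ra\subset\cu_{\wchowsg=0}$, exactly as required.

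Next I would produce a Chow motive whose $\pi$-image is isomorphic to $\mgb(X)=\pi(\mgr(X))$. By assertion \ref{imotw3} the object $\mgb(X)$ belongs to $\mgb(\sv)$, hence to the heart $\chowbir=\hwgbir$ of $\wgbir$; since $\mgb(X)\in\obj\dmgbirp$, it therefore lies in the heart $\dmgbirp_{\wgbirp=0}$ of the restricted weight structure $\wgbirp$. On the other hand, Proposition \ref{ploc}(\ref{iwl31})---which is precisely the mechanism producing $\wgbirp$ in assertion \ref{imotw2}---asserts that $\dmgbirp_{\wgbirp=0}$ essentially equals $\pi(\chower)$. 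Combining these, there exists $N'\in\obj\chower$ together with an isomorphism $\pi(N')\cong\pi(\mgr(X))$ in $\dmgbirp$.

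Finally I would invoke the "consequently" part of Proposition \ref{ploc}(\ref{iwl4}). We have $\mgr(X)\in\dmger_{\wchowsg\le 0}$ by assertion \ref{imotp1}, we have $N'\in\dmger_{\wchowsg=0}$, and $\pi(\mgr(X))\cong\pi(N')$; so the isomorphism lifts to a morphism $h\in\dmger(\mgr(X),N')$ with $\pi(h)$ an isomorphism. Setting $M=N'$ gives the required object and morphism.

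The genuine content lies in the middle step: one must know that $\pi(\mgr(X))$ actually lands in the isomorphism closure of the image of the Chow heart, i.e.\ that $\mgb(X)$ is a birational Chow motive. This is exactly the description of $\hwgbir=\chowbir$ recorded in assertion \ref{imotw3}, and it is the step where the structure of the birational localization is really used. Once it is in hand, the lifting is automatic: it comes from the surjectivity of $\dmger(\mgr(X),N')\to\dmgbirp(\pi(\mgr(X)),\pi(N'))$ furnished by the first part of Proposition \ref{ploc}(\ref{iwl4}), which ultimately rests on the weight orthogonality between $\cu_{w\le 0}$ and $\cu_{w\ge 1}$.
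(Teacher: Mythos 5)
Your treatment of assertion \ref{imotp2} is correct and coincides with the paper's own proof of that part: the paper likewise observes that $\mgb(X)\in\obj\dmgbirp$ lies in $\chowbir=\hwgbir$ by assertion \ref{imotw3}, hence in $\dmgbirp{}_{\wgbirp=0}$, and then applies Proposition \ref{ploc} (with $\cu=\dmger$, $\cp=\obj\chower\lan 1\ra$), using $\mgr(X)\in\dmger{}_{\wchowsg\le 0}$ from assertion \ref{imotp1} and the lifting statement of Proposition \ref{ploc}(\ref{iwl4}) to produce $M=N'$ and $h$. Your intermediate observation that $N'$ exists because $\dmgbirp{}_{\wgbirp=0}$ essentially equals $\pi(\chower)$ (Proposition \ref{ploc}(\ref{iwl31})) is exactly the mechanism implicit in the paper's appeal to Proposition \ref{ploc}.

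The gap is one of coverage: the statement is the whole of Theorem \ref{tmotw}, and you prove only its last assertion, citing \ref{imotp1}--\ref{imotw3} as established facts. Those parts carry genuine content that your text nowhere supplies, and your own middle step leans on them. Concretely: assertion \ref{imotp1} requires the negativity of $\chower$ in $\dmger$ (Proposition \ref{prmot}(\ref{imot7})) together with Proposition \ref{pbw}(\ref{igen},\ref{igenlm}) to construct $\wchowsg$ and obtain strong generation, Proposition 6.7.3 of \cite{bev} combined with Proposition \ref{pbw}(\ref{iort}) to get $\mgr(X)\in\dmger{}_{\wchowsg\le 0}$, and the Gabber-resolution-based results of \cite{bzp} and \cite{kellyast} for the dimension refinement; assertion \ref{imotw3}, which you correctly identify as ``the genuine content'' behind your middle step, is not formal --- in the paper the identification $\hwgbir=\chowbir$ rests on Theorem 5.2.1 of \cite{bososn} (giving a bounded weight structure on $\dmgbir$ with heart $\chowbir$) matched against the extension of $\wgbirp$ via the uniqueness clause of Proposition \ref{pbw}(\ref{igen}). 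One smaller remark: you verify $\chower\lan 1\ra\subset\chower$ by invoking the weight-exactness of $-\lan 1\ra$ from assertion \ref{imotw2}, but unconditionally the logic runs the other way --- the inclusion is the elementary geometric fact (the twist of $\mgr(P)$ is a retract of $\mgr(P\times\p^1)$), and weight-exactness is then deduced from it via Proposition \ref{pbw}(\ref{iwex}); your ordering is only consistent because you assume \ref{imotw2} outright. So as a proof of the full theorem the proposal is incomplete, while as a proof of \ref{imotp2} given the earlier parts it matches the paper.
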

\begin{proof}
\ref{imotp1}. Applying Proposition \ref{pbw}(\ref{igen}) along with the negativity of $\chower$ in $\dmger$ provided by Proposition \ref{prmot}(\ref{imot7}) we obtain the existence of a bounded weight structure  $\wchowsg$ on $\dmger$ whose heart equals $\chower$; note that $\chower$  is retraction-closed in $\dmger$ by definition.  Applying Proposition  \ref{pbw}(\ref{igenlm}) we also obtain that $\chower$ strongly generates $\dmger$. Next, Proposition 6.7.3 of \cite{bev} implies that $\mgr(X)\perp \dmger_{\wchowsg\ge 1}$; hence $\mgr(X)\in \dmger_{\wchowsg\le 0}$ by Proposition \ref{pbw}(\ref{iort}) (alternatively, see \S2.1 of \cite{bokum}). 

Lastly, the aforementioned arguments of \cite{bzp} and \cite{kellyast} easily yield that $\mgr(X)$ belongs to the subcategory of $\dmger$ densely generated by motives of projective varieties of dimension at most $\dim X$ indeed. 

\ref{imotw2} 
 The twist functor $-\lan 1\ra$ obviously sends the subcategory $\chower$ into itself. 
  Applying Proposition \ref{ploc}(\ref{iwl31}) we obtain that $\wchowsg$ restricts to the subcategory $\dmger\lan 1\ra$, and  there exists a weight structure $\wgbirp$  on $ \dmgbirp=\dmger/\dmger\lan 1 \ra$ such that the localization functor is weight-exact.
		
\ref{imotw3}. According to Proposition \ref{pbw}(\ref{ineg},\ref{igen}), 
 $\wgbirp$ extends to a weight structure $\wgbir$ on  $\dmgbir$  indeed. On the other hand,  Theorem 5.2.1 of \cite{bososn} gives a bounded weight structure $w$ on $\dmgbir$ whose heart 
	 equals $\chowbir$. Since $\pi(\chower)\subset \chowbir$, 
	 Proposition \ref{pbw}(\ref{igen})  yields that $w=\wgbir$.
	
	\ref{imotp2}. 
Since $\mgb(X)$ is an object of $\dmgbirp$, the previous assertion  implies that  $\mgb(X)\in \dmgbirp{}_{\wgbirp=0}$. Moreover, $\mgr(X)\in \dmger{}_{\wchowsg\le 0}$ by assertion \ref{imotp1}.    Thus we can apply Proposition \ref{ploc} (for $\cu=\dmger$ and $\cp=\obj \chower\lan 1\ra$) to obtain the existence of $M$ and $h$  as desired. 
	\end{proof}

\begin{rema}\label{rcompact}
 The proof of  the aforementioned Proposition 5.3.3 of \cite{kellyast} and  Theorem 2.2.1(1) of  \cite{bzp} relied on Gabber's resolution of singularities (see Theorem X.2.1 of \cite{illgabb}). Thus if some "$p$-adic version" of loc. cit. was available (in the case $p>0$) then one would be 
 able to extend all the statements of this section to the case of an arbitrary coefficient ring $R$. 

On the other hand, if $p=0$ then Hironaka's resolution of singularities gives a smooth compactification $P$ for (every) $X\in\sv$, and the Gysin distinguished triangle in Proposition \ref{prmot}(\ref{imot5}) implies that $\co(\mgr(X)\to \mgr(P))\in \obj \dmger\lan 1 \ra$. This makes our results much less interesting in the case $p=0$.
\end{rema}


Now we will relate motivic categories to unramified cohomology. 
 We will use several results of 
  \cite{kabir} where only the case $R=\z$ was considered; note however that the proofs generalize to the case of an arbitrary coefficient ring $R$ without any problems. 

\begin{theo}\label{tbir}
Let $N$ be an object of $\dmer$,  $S$ is  a sheaf with transfers (see Proposition \ref{prmot}(\ref{imot6})),  and $X\in \sv$.

\begin{enumerate}
\item\label{ibir1} There exists an exact fully faithful functor $\ii$ that is right adjoint to $\pi$. 

Moreover, 
 $N$ belongs to the essential image of $\ii$ if and only if the homomorphism $\dmer(\mgr(f),N[i])$ 
is bijective for any $i\in \z$ and any open dense embedding $f$ of smooth $k$-varieties; if this is the case then we will say that $N$ is {\it birational}. 

\item\label{ibir2} 
 $S$ is birational if and only if the homomorphism $\dmer(\mgr(f),S)\cong \for(S)(f)$ (see Proposition \ref{prmot}(\ref{imot6}) once again)  is bijective for any $f$ as above. Moreover, if this is the case then $\dmer(\mgr(X),S[i]) = 
\ns$ for any $i\neq 0$. 

\item\label{ibir3} 
 The subcategory $\hiz$ of  birational sheaves is a Serre  subcategory of $\hir$, and the embedding $\hiz\to \hir$ possesses a right adjoint functor $\rnrz$ that sends a sheaf with transfers into its maximal birational subsheaf. 

Moreover,  for the 
 counit homomorphism $c(S):\rnrz(S)\to S$ the image of  $\for(c(S))(X)$ gives the {\it unramified 
 part of $\for(S)(X)$} 
 in the sense of \cite[Definition 7.2.1]{kabir}.

\item\label{ibir4} 
Assume that 
$M\in \obj \chower$ 
  and for $h\in \dmger(\mgr(X),M)$ the morphism $\pi(h)$ is an isomorphism.
Then the homomorphism  
$\dmer(h,S)$ is injective, and its image coincides with the  aforementioned unramified 
 subgroup of  $\dmer(\mgr(X),S)\cong \for(S)(X)$. 

\item\label{ibirsh} The restriction of the  functor $\dmer(-,\rnrz(S))$ to $\dmger$ is isomorphic to $H_{S_{\chower}}$ (see Proposition \ref{pwt}(\ref{iwcpu})), where $S_{\chower}$ is the restriction of $\dmer(-,S)$ to $\chower$.

\item\label{ibirc}  A cohomological functor from $\dmger$ into an abelian category $\au$ is birational (i.e., $H(\mgr(f)[-i])$ is an isomorphism any open dense embedding $f$ of smooth $k$-varieties and all $i\in \z$) if and only if 
 $H$ annihilates $\dmger\lan 1\ra$; these conditions are fulfilled  if and only if $H(\mgr(P)\lan 1 \ra[-i])=0$ for all $P\in \spv$ and $i\in \z$.

In particular, $N$ is birational (in the sense of assertion \ref{ibir1}) if and only if $\mgr(P)\lan 1\ra\perp N[i]$ for any $i\in \z$ and $P\in \spv$, and for an additive functor $F:\chower{}\opp\to \au$ the corresponding functor $H_F$ (see Proposition \ref{pwt}(\ref{iwcpu})) is birational if and only $F$ kills all  $\mgr(P)\lan 1\ra$.
\end{enumerate}
\end{theo}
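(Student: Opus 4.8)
Theorem \ref{tbir}(\ref{ibirc}) asserts several equivalences concerning birationality of (co)homological functors on $\dmger$. Let me analyze what needs to be proved.

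Let me parse the statement. We have:
- A cohomological functor $H$ from $\dmger$ into abelian $\au$
- "$H$ is birational" means $H(\mgr(f)[-i])$ is an iso for all open dense embeddings $f$ and all $i$
- Three equivalent conditions, plus two "in particular" statements

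Let me think about the structure of the proof.
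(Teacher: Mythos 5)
There is no proof here: your proposal stops exactly where a proof would have to begin. You restate the statement of part (\ref{ibirc}), announce that you will "think about the structure of the proof," and then end. No argument is given for any of the claimed equivalences, and the other five parts of the theorem (\ref{ibir1})--(\ref{ibirsh}) are not addressed at all. As written, nothing can be checked against the paper.

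To indicate what is actually missing for (\ref{ibirc}): the paper's argument hinges on two geometric inputs. First, for any open dense embedding $f$ of smooth $k$-varieties, the Gysin distinguished triangle of Proposition \ref{prmot}(\ref{imot5}) (applied to the complementary closed subvariety, which has codimension $\ge 1$) shows that $\co(\mgr(f))$ lies in $\obj \dmger\lan 1\ra$; hence any $H$ annihilating $\dmger\lan 1\ra$ is birational. Second, for the converse one needs a dense embedding whose cone realizes an arbitrary Tate twist: the embedding $f_X:\afo\times X\to \p^1\times X$ is open and dense, and $\co(\mgr(f_X))\cong \mgr(X)\lan 1\ra$, so a birational $H$ kills all $\mgr(X)\lan 1\ra[i]$. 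The reduction of the condition "$H$ annihilates $\dmger\lan 1\ra$" to the condition "$H(\mgr(P)\lan 1\ra[-i])=0$ for $P\in\spv$" then requires Theorem \ref{tmotw}(\ref{imotp1}), which says that the objects $\mgr(P)\lan 1\ra$ densely generate $\dmger\lan 1\ra$ (a cohomological functor vanishing on a class of objects vanishes on the subcategory they densely generate). The "in particular" statements follow by applying this to $H=\dmer(-,N)$ (using part (\ref{ibir1}) to identify birationality of $N$ with birationality of this functor) and to $H=H_F$, where one must also recall from Proposition \ref{pwt}(\ref{iwcpu}) that $H_F$ annihilates $\chower[i]$ for $i\neq 0$. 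None of these steps appears in your proposal, so the gap is total rather than local.
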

\begin{proof}
\ref{ibir1}.  The existence of $\ii$ is an immediate consequence of well-known abstract nonsense; see Theorem 4.3.5 of \cite{kabir}. 
The second part of the assertion is obvious. 

\ref{ibir2}. Easy from Theorem 4.2.2(e) of ibid. (along with Proposition \ref{prmot}(\ref{imot5})).

\ref{ibir3}. $\hiz$ a Serre  subcategory of $\hir$ according to Proposition 2.6.2 of 
 ibid. The right  adjoint functor $\rnrz$ exists according to Proposition 2.6.3 of ibid., and it gives the maximal birational subsheaf since $\hiz$ is a Serre subcategory.

Lastly, 
$c(S)$ gives the corresponding unramified 
 subgroup according to Theorem 7.3.1 of ibid.

\ref{ibir4}. Assume that $M$ is a retract of $\mgr(P_M)$ for $P_M\in \spv$. Then  
the homomorphism $\dmger(\mgr(P_M),c(S))$ 
 is bijective by Corollary 7.3.2 of ibid. Thus 
 we have a  commutative square
$$\begin{CD}
 \dmer(M,\rnrz(S)) @>{\cong}>>\dmer(\mgr(X),\rnrz(S)) \\
@VV{\cong}V@VV{}V \\
\dmer(M,S)@>{}>>\dmer(\mgr(X),S)
\end{CD}$$
that clearly gives the result in question.

\ref{ibirsh}. To apply Proposition \ref{pwt}(\ref{iwcpu}) we should verify that the restriction of $\dmer(-,\rnrz(S))$ 
  to $\chower[i]\subset \dmer$ is zero if $i\neq 0$ and is isomorphic to $H_{S_{\chower}}$ for $i=0$.

The first of this statements immediately follows from assertion \ref{ibir2}, and the second one 
 is an easy consequence of assertion \ref{ibir4} (as well as of  \cite[Corollary 7.3.2]{kabir}).

\ref{ibirc}. If $f$ is a smooth dense embedding then Proposition \ref{prmot}(\ref{imot5}) easily implies that $\co(\mgr(f))\in \obj \dmger\lan 1\ra$. On the other hand, if $X\in \sv$ then $f_X:\afo\times X\to \p^1\times X$ is a dense embedding, and $\co(\mgr(f_X))\cong \mgr(X)\lan 1\ra$. The first equivalence in the assertion follows easily.

Conversely, if $f$ is a smooth dense embedding then Proposition \ref{prmot}(\ref{imot5}) easily implies that $\co(\mgr(f))\in \obj \dmger\lan 1\ra$. Theorem \ref{tmotw}(\ref{imotp1}) implies that the set of all $\mgr(P)\lan 1\ra$ densely generates the category $\dmger\lan 1\ra$; this gives the converse implication.

The "in particular" statements are easy as well; we only note that the functor $H_F$ annihilates $\chower[i]$ for all $i\neq 0$ (see Proposition \ref{pwt}(\ref{iwcpu})). 
 \end{proof}

Now let us apply Theorem \ref{tbir} to the study of Brauer groups. We will also relate it to Theorem 5.1 of \cite{eked} later.

\begin{coro}\label{cbrauer}
Assume that $R$ is a localization of $\zop$, 
 and set $P^{\bra} = \rnrz(S^{\bra})$ (see Proposition \ref{prmot}(\ref{imotet}) for the definition of $S^{\bra}$ along with $\rgm$).

1. Then $P^{\bra}$ is a birational sheaf with transfers (consequently, it annihilates $\chower\lan 1\ra$), and $\for(P^{\bra})$ is the {\it unramified Brauer group sheaf}, that is, it sends a connected smooth $k$-variety $Y$ into the unramified part of the group $\bra(\spe(k(Y)))$. 

2.  The restriction $H^{\bra}_{nr}$ of the functor  $ \dmer(-,P^{\bra})$ to $\dmger$ is isomorphic to the functor
$H_{S^{\bra}{\chower}}\cong H_{\rgm[2]_{\chower}}$, where $S^{\bra}_{\chower}$ (resp. $\rgm[2]_{\chower}$)  
 is the restriction to $\chower$ of the functor $\dmer(-,S^{\bra}) $ (resp. $\dmer(-,\rgm[2]) $).


Moreover, if $N=\mgr(X)$ for some $X\in \sv$ and $h:N\to M\in \dmger{}_{\wchowsg=0}$ is the morphism provided by Theorem \ref{tmotw}(\ref{imotp2}) then  $H^{\bra}_{nr}(N)\cong \imm (\dmger(h,S^{\bra}))\cong \imm  (\dmger(h,\rgm[2]))$. 

 \end{coro}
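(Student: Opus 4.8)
The plan is to derive both assertions as essentially formal consequences of Theorem~\ref{tbir}, combined with the description of $S^{\bra}$ and $\rgm$ furnished by Proposition~\ref{prmot}(\ref{imotet}); I do not expect any genuinely new computation to be needed.

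For part~1, I would first observe that $P^{\bra}=\rnrz(S^{\bra})$ is birational by construction, being the maximal birational subsheaf of $S^{\bra}$ by Theorem~\ref{tbir}(\ref{ibir3}). The ``in particular'' clause of Theorem~\ref{tbir}(\ref{ibirc}) then yields $\mgr(P)\lan 1\ra\perp P^{\bra}[i]$ for all $P\in\spv$ and $i\in\z$; since every object of $\chower\lan 1\ra$ is a retract of some $\mgr(P)\lan 1\ra$, the functor $\dmer(-,P^{\bra})$ annihilates $\chower\lan 1\ra$. To identify $\for(P^{\bra})$, I would use that the counit $c(S^{\bra})\colon P^{\bra}\to S^{\bra}$ is a monomorphism, so that applying the exact faithful functor $\for$ and invoking Theorem~\ref{tbir}(\ref{ibir3}) the image of $\for(c(S^{\bra}))(Y)$ in $\for(S^{\bra})(Y)=\bra(Y)$ is the unramified part. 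Finally, birationality of $P^{\bra}$ makes $\for(P^{\bra})(f)$ an isomorphism for every dense open embedding $f$ (Theorem~\ref{tbir}(\ref{ibir2})), so for connected $Y$ one has $\for(P^{\bra})(Y)=\varinjlim_U\for(P^{\bra})(U)=\for(P^{\bra})(\spe k(Y))$, which is thus the unramified part of $\bra(\spe k(Y))$.

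For the first isomorphisms in part~2, I would apply Theorem~\ref{tbir}(\ref{ibirsh}) with $S=S^{\bra}$ to get $H^{\bra}_{nr}=\dmer(-,P^{\bra})|_{\dmger}\cong H_{S^{\bra}_{\chower}}$, and then reduce the remaining isomorphism to a natural isomorphism of additive functors $S^{\bra}_{\chower}\cong\rgm[2]_{\chower}$ on $\chower$. Such an isomorphism is provided on $\sm'$ by Proposition~\ref{prmot}(\ref{imotet}); since $\chower=\kar_{\dmger}(\mgr(\spv))$ and $\mgr(\spv)\subset\obj\sm'$, it extends uniquely to the retraction-closure $\chower$, and functoriality of $H_{(-)}$ (Proposition~\ref{pwt}(\ref{iwcpu})) transports it to $H_{S^{\bra}_{\chower}}\cong H_{\rgm[2]_{\chower}}$. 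For the ``moreover'' part, with $h\colon\mgr(X)\to M\in\chower$ and $\pi(h)$ an isomorphism, Theorem~\ref{tbir}(\ref{ibir4}) for $S=S^{\bra}$ shows that $\dmer(h,S^{\bra})$ is injective with image the unramified subgroup of $\dmer(\mgr(X),S^{\bra})\cong\bra(X)$. Exactly as in part~1 the map $\dmer(\mgr(X),c(S^{\bra}))$ is injective (global sections being left exact) with the same unramified image, whence $H^{\bra}_{nr}(N)=\dmer(\mgr(X),P^{\bra})$ is isomorphic to that subgroup; this gives the first displayed isomorphism, and the second follows from the commutative square
$$\begin{CD}
\dmer(M,S^{\bra}) @>{\cong}>> \dmer(M,\rgm[2])\\
@V{\dmer(h,S^{\bra})}VV @VV{\dmer(h,\rgm[2])}V\\
\dmer(\mgr(X),S^{\bra}) @>{\cong}>> \dmer(\mgr(X),\rgm[2])
\end{CD}$$
whose horizontal arrows are the isomorphisms of the preceding paragraph.

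The main obstacle I anticipate is not any isolated deep step but the bookkeeping surrounding this natural isomorphism: one must extend $\dmer(-,S^{\bra})\cong\dmer(-,\rgm[2])$ from $\sm'$ (whose objects are the $\mgr(\sv)$) to the idempotent-closed category $\chower$ and check its naturality with respect to $h$, which runs from $\mgr(X)\in\sm'$ to a genuine retract $M$. One must likewise be careful to match the literal unramified subgroups produced by Theorem~\ref{tbir}(\ref{ibir3}) and~(\ref{ibir4}) --- rather than merely abstractly isomorphic groups --- using the birational invariance of the unramified part; once the naturality is set up correctly, the rest is routine.
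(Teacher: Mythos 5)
Your proposal is correct and follows the paper's own route exactly: the paper proves this corollary in one line, citing precisely the combination of Theorem \ref{tbir} (parts \ref{ibir3}, \ref{ibir4}, \ref{ibirsh}, \ref{ibirc}) and Proposition \ref{prmot}(\ref{imotet}) that you spell out. The details you fill in — extending the isomorphism $\dmer(-,S^{\bra})\cong \dmer(\mgr(-),\rgm[2])$ from $\sm'$ to the retraction-closure $\chower$ and matching the unramified subgroups via the counit $c(S^{\bra})$ — are exactly the routine bookkeeping the paper leaves implicit.
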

\begin{proof}
All the statements easily follow from   Theorem \ref{tbir}  combined with Proposition \ref{prmot}(\ref{imotet}). 
\end{proof}

\subsection{On the relation to motivic Euler characteristics of varieties}\label{smgc}

Starting from this moment we will assume that $R$ is a $\zop$-algebra; the main case is just $R=\zop$.

Let us recall some properties of motives with compact support.


\begin{pr}\label{pmgc}
Denote by   $\mgcr$ the $R$-linear  motif with compact support functor from the category $\schpr$ of $k$-varieties with morphisms being proper morphisms of varieties into $\dmer$; this functor  is  essentially provided by  \S4.1 of \cite{1} along with \S5.3 of \cite{kellyast} (see the proof below). 

Then $\mgcr$ satisfies the following properties.

\begin{enumerate}
\item\label{imceq}
We have $\mgcr(P)=\mgr(P)$  whenever $P\in \spv$.\footnote{More generally, $\mgcr(X)\in \obj \dmger$ for any $X\in \var$. However, we will not consider motives of singular varieties in this paper.}

\item\label{imctr} If $i:Z\to X$ is a closed embedding of $k$-varieties and $U=X\setminus Z$ then there exists a distinguished triangle 
\begin{equation}\label{eimctr}
\mgcr(Z)\stackrel{\mgcr(i)}{\longrightarrow} \mgcr(X)\to \mgcr(U)\to \mgcr(Z)[1].\end{equation}

\item\label{imcpr} If $X,Y\in \var$ then $\mgcr(X\times Y)\cong \mgcr(X)\otimes \mgcr(Y)$.


\item\label{imceuler} Consequently, the function sending $X\in \var$ into the class of $\mgcr(X)$ in $K_0^{\tr}(\dmger)$  sends products of varieties into products in $K_0^{\tr}(\dmger)$ (see Remark \ref{reuler}(3)) and satisfies the {\it scissors} relation $[X]=[U]+[Z]$ under the assumptions of assertion \ref{imctr}.

\item\label{imcp} Thus 
 for any additive functor $F:\chower\to \au$, where $\au$ is an additive category, the correspondence $X\mapsto F_{K_0}([\mgcr(X)])$ (see Theorem \ref{teuler}(1)) satisfies the scissors relation  as well. 

\end{enumerate}

\end{pr}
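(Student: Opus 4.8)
The plan is to pin down the construction of the functor $\mgcr$ and then derive the five listed properties; since the last two are purely formal consequences, the work is concentrated in the construction and in assertions \ref{imceq}--\ref{imcpr}. First I would recall Voevodsky's motif with compact support from \S4.1 of \cite{1}: to each reduced finite-type separated $k$-scheme $X$ it attaches an object of $\dmgez$, covariantly functorial in proper morphisms, so that it defines a functor on $\schpr$; extending coefficients along $\z\to R$ gives the $R$-linear version in the case $p=0$. Since we assume $R$ is a $\zop$-algebra, in the case $p>0$, where the integral theory is unavailable, this construction together with its functoriality and all the properties below is carried out directly for $\zop$-linear coefficients in \S5.3 of \cite{kellyast}, replacing Hironaka's resolution by Gabber's. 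The essential output is that $\mgcr(X)\in\obj\dmger$ for every $X\in\var$ (and not merely for $X\in\sv$), which is exactly what will allow us to feed $\mgcr(X)$ into $F_{K_0}$ in the final assertion.

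For assertion \ref{imceq} I would invoke the comparison showing that the compact-support motif agrees with the ordinary one for proper $X$; restricting to $P\in\spv$, which is smooth and proper, gives $\mgcr(P)=\mgr(P)$. Assertion \ref{imctr} is the localization triangle for compact-support motives: for a closed embedding $i:Z\to X$ with open complement $U=X\setminus Z$, \S4.1 of \cite{1} produces the distinguished triangle \eqref{eimctr} whose first arrow is $\mgcr(i)$, and this passes verbatim to the $R$-linear setting (respectively, is established directly in \cite{kellyast} when $p>0$). Assertion \ref{imcpr}, the K\"unneth property, is obtained likewise from \S4.1 of \cite{1} and its extension in \cite{kellyast}, with $\otimes$ the monoidal structure of Proposition \ref{prmot}(\ref{imot3}).

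Assertions \ref{imceuler} and \ref{imcp} are then formal. Reading the triangle of assertion \ref{imctr} in $K_0^{\tr}(\dmger)$ through Definition \ref{dk0}(2) yields $[\mgcr(X)]=[\mgcr(Z)]+[\mgcr(U)]$, i.e. the scissors relation $[X]=[U]+[Z]$; and assertion \ref{imcpr}, together with the bi-additive multiplication on $K_0^{\tr}(\dmger)$ induced by $\otimes$ (Remark \ref{reuler}), shows that $X\mapsto[\mgcr(X)]$ carries products to products, giving assertion \ref{imceuler}. For assertion \ref{imcp}, Theorem \ref{teuler}(1) furnishes a group homomorphism $F_{K_0}:K_0^{\tr}(\dmger)\to K_0^{\add}(\au)$, so applying $F_{K_0}$ to the scissors identity gives $F_{K_0}([\mgcr(X)])=F_{K_0}([\mgcr(U)])+F_{K_0}([\mgcr(Z)])$, as required.

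The genuinely substantive content, and the main obstacle, is the construction of $\mgcr$ together with assertions \ref{imceq}--\ref{imcpr} in characteristic $p>0$ with $\zop$-algebra coefficients: over a field of characteristic $0$ all of this is Voevodsky's \S4.1, but when $p>0$ the integral theory fails and one depends on Kelly's \S5.3, whose availability rests on Gabber's resolution of singularities --- precisely the point flagged in Remark \ref{rcompact}. Once the construction and assertions \ref{imctr} and \ref{imcpr} are secured, assertions \ref{imceuler} and \ref{imcp} follow at once.
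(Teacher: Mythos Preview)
Your proposal is correct and follows essentially the same route as the paper: cite Voevodsky's \S4.1 and Kelly's \S5.3 for the construction and for assertions \ref{imceq}--\ref{imcpr}, then observe that \ref{imceuler} and \ref{imcp} are formal. The paper is slightly more explicit on one point you gloss over: Kelly's results are stated for $\zop$-coefficients, and to pass to a general $\zop$-algebra $R$ one either reruns the arguments or applies the base-change functor $-\otimes_{\zop}R:\dmeop\to\dmer$ (whose relevant properties are recorded in \cite{bokum}); you handle the $p=0$ case by extending along $\z\to R$ but leave the analogous step for $p>0$ implicit.
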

\begin{proof}
In Definition 5.3.1, Lemma 5.3.6, Proposition 5.3.12(1) (combined with Theorems 5.2.20, 5.2.21, and 5.3.14), 
Proposition 5.3.5,  Proposition 5.3.8, and Corollary 5.3.9 of \cite{kellyast}, respectively, the obvious $\zop$-linear analogues of 
 assertions \ref{imceq}--\ref{imcpr}  
 were justified. 

The $R$-linear results in questions can either be proved similarly or deduced from the results of ibid. using the properties of the obvious connecting functor $-\otimes_{\zop}R:\dmeop\to \dmer$  given by Proposition 1.3.3 of \cite{bokum}.

Lastly, assertions \ref{imceuler} and \ref{imcp} follow from assertions \ref{imctr} and \ref{imcpr} immediately.
\end{proof}

\begin{rema}\label{rekest}
If $k$ is characteristic $0$ field then Hironaka's resolution of singularities easily implies that any function from $\sv$ into an abelian group $A$ that satisfies the scissors relation is uniquely determined by its values on smooth projective varieties.

Using this observation one can obtain an extension of 
  \cite[Theorem 3.4(i)]{ekest} to fields of positive characteristics.
	
	We will discuss other functions of this sort in Theorem \ref{tceked} and Remark \ref{reked}; see also Proposition 4.2 of \cite{scav}.
\end{rema}


In order to relate motives with compact support with earlier results we recall some basics on the twist-stable motivic category
$\dmgr$.  Proposition \ref{prmot}(\ref{imot4}) implies that the obvious functor from $\dmer$ into the  $2$-colimit category 
$\inli (\dmger\stackrel{\lan 1 \ra}{\longrightarrow}\dmger\stackrel{\lan 1 \ra}{\longrightarrow}\dmger\stackrel{\lan 1 \ra}{\longrightarrow}\dots$ 
is a full exact embedding (of triangulated categories); cf. \S2.1 of \cite{1} or \S6.1 of \cite{bev}. Consequently, we will assume that $\dmger$ is  a full strict subcategory of $\dmgr$. Moreover, $\dmgr$ a monoidal category (see loc. cit.), this embedding is monoidal, and for any $n\ge 0$ the functor $-\lan n\ra_{\dmgr}=-\otimes R\lan n \ra$ has an obvious inverse that we will denote by  $-\lan -n\ra$. 


\begin{pr}\label{pstab}
Assume that $X,Y\in \sv$, and $\dim X\le n$ (for some $n\ge 0$). 
\begin{enumerate}
\item\label{istabr}
$\dmgr$ is a rigid category, i.e., all its objects are dualizable.

\item\label{istabd} If 
 the connected components of $X$ are of dimension $n$ then $\mgcr(X)\cong D({\mgr(X)})\lan n \ra$, where $D=D_{\dmgr}$ is the duality functor.

\item\label{istab1} There exists a  unique bounded weight structure $\wchowg$ on $\dmgr$ whose heart is the category 
$\chowr=\chower[\lan -1\ra]$ (i.e., this is the full strict subcategory of $\chower$ whose object class equals $\cup_{n\ge 0} \obj \chower\lan -n \ra $).

\item\label{istab2}
The auto-equivalences  $-\lan n \ra$ are weight-exact with respect to  $\wchow$ for all $n\in \z$.

\item\label{istab3}
The duality functor $D_{\dmgr}$ is weight-exact (as well), i.e., it sends $\dmgr{}_{\wchowg\le 0}$ into $\dmgr{}_{\wchowg\ge 0}$ and vice versa (cf. Proposition \ref{pbw}(\ref{idual})).

\item\label{istabdu} Duality restricts to the subcategory $\chowr$ of $\dmgr$, and for the weight complex functor $\tst:\dmgr\to K^b(\chowr)$ we have $D_{K^b(\chowr)}\circ \tst\cong \tst\circ D_{\dmgr}$.

\item\label{istab4} 
 $K_0^{\tr}(\dmgr)\cong K_0(\dmger)[R\lan 1\ra]^{-1}\cong K_0^{\add}(\chowr) \cong K_0^{\add}(\chower)[R\lan 1\ra]^{-1}$.




\item\label{istabkz} Denote the category of finitely generated $\zl$-modules by $\zlmod$. Then $K_0^{\add}(\zlmod)$ is the free abelian group generated by the classes of $\zl$ and of $\zl/\ell^i\zl$ for all $i>0$.

\item\label{istabet} Assume that $k$ is an algebraically closed field, 
  $R$ is a localization of $\zop$, and a prime $\ell$ belongs to $ \p\setminus\{p\}$ and is not invertible in $R$.

Then there exists an \'etale realization functor $RH_{et,\zl}:\dmgr\to D^b(\zl)$ that possesses the following properties:

$RH_{et,\zl}(\mg(X)) \cong RH(X_{et},\zl)$, $RH_{et,\zl}\circ- \lan n \ra\cong RH_{et,\zl}\circ [2n]$,\footnote{Certainly, these isomorphisms are not canonical, but this does not make any difference for our purposes. Similarly, in (\ref{egm}) we essentially choose a non-canonical identification  $RH_{et,\znz}\circ- \lan 1 \ra\cong RH_{et,\znz}\circ[-2]$.}  and $RH_{et,\zl}\circ D_{\dmgr}\cong D_{D^b(\zl)}\circ RH_{et,\zl}$.

Moreover, if for all $i\in \z$ the functor $H^i_{et,\zl}$  is the  $i$-th cohomology of $RH_{et,\zl}$ applied to objects of $\dmger$ then there exist 
 $\znz$-\'etale realization functors $H^i_{et,\znz}:\dmger\to \znzmod\subset\zlmod$ along with the following long exact sequences of functors from $\dmger$: 

$\dots\to  H^i_{et,\zl}\stackrel{\times \ell^n}{\longrightarrow}H^i_{et,\zl}\to H^i_{et,\znz}\to  H^{i+1}_{et,\zl}\stackrel{\times \ell^n}{\longrightarrow}H^{i+1}_{et,\zl}\to\dots$ 
 and  \begin{equation}\label{egm} \dots\to  H'{}^{i-1}\stackrel{\times \ell^n}{\longrightarrow}H'{}^{i-1}\to H^i_{et,\znz}\to  H'{}^{i}\stackrel{\times \ell^n}{\longrightarrow}H'{}^{i}\to\dots,\end{equation}  where $H'{}^j$ for $j\in \z$ denotes the restriction of the functor $\dmer(-,\rgm[j])$ 
 to $\dmger$, and $\rgm$ is the object of $\dmer$ 
 mentioned in Proposition \ref{prmot}(\ref{imotet}). 
\end{enumerate}
\end{pr}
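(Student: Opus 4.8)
The plan is to deduce the duality functor from rigidity, then bootstrap the weight structure on $\dmgr$ from its effective counterpart, and finally treat the $K_0$-computations and the \'etale realization in turn. For assertion \ref{istabr} I would recall that $\dmgr$ is generated, as a thick triangulated tensor subcategory, by the Tate twists $\mgr(P)\lan m\ra$ of motives of smooth projective $P$ (this is Theorem \ref{tmotw}(\ref{imotp1}) together with the $2$-colimit description of $\dmgr$ preceding this proposition). Each $\mgr(P)$ is dualizable by motivic Poincar\'e duality, with dual $\mgr(P)\lan -\dim P\ra$; since dualizable objects are closed under $\otimes$, extensions and retracts and contain the generators, every object of $\dmgr$ is dualizable. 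This is standard once $1/p\in R$; one cites the references for $\dmgr$ given above. Granting \ref{istabr}, assertion \ref{istabd} is motivic Poincar\'e--Lefschetz duality $\mgcr(X)\cong D(\mgr(X))\lan n\ra$ for smooth $X$ with components of dimension $n$: for $R=\zop$ this is part of the formalism of \S5.3 of \cite{kellyast}, and the general $R$ case follows by applying $-\otimes_{\zop}R$ exactly as in the proof of Proposition \ref{pmgc}.

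For the weight-structure cluster \ref{istab1}--\ref{istabdu} I would first observe that $\chowr$ is negative in $\dmgr$: its objects are Tate twists of objects of the negative subcategory $\chower$ (Proposition \ref{prmot}(\ref{imot7})), and the twists $-\lan n\ra$ are exact auto-equivalences, hence preserve negativity. Since the same twists show that $\chowr$ densely generates $\dmgr$, Proposition \ref{pbw}(\ref{igen}) supplies the unique bounded weight structure $\wchowg$ with heart $\chowr$, giving \ref{istab1}. For \ref{istab2}, each $-\lan n\ra$ carries $\chowr$ into itself by construction, so the boundedness criterion Proposition \ref{pbw}(\ref{iwex}) makes it weight-exact. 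For \ref{istab3}, the standard fact that $\chowr$ is stable under duality gives $D_{\dmgr}(\dmgr{}_{\wchowg=0})=\dmgr{}_{\wchowg=0}$; as $D_{\dmgr}$ is a contravariant exact functor, the opposite-category form of Proposition \ref{pbw}(\ref{iwex}) (cf. Proposition \ref{pbw}(\ref{idual})) then yields weight-exactness. Finally \ref{istabdu} follows by feeding the contravariant weight-exact $D_{\dmgr}$ into the contravariant clause of Proposition \ref{pwt}(\ref{iwcfunct}): the induced functor on hearts is $D|_{\chowr}$, and its $K^b$-version coincides, under the contravariant convention, with $D_{K^b(\chowr)}$, whence $\tst\circ D_{\dmgr}\cong D_{K^b(\chowr)}\circ\tst$. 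One uses here that $D_{\dmgr}$ lifts to the $\infty$-enhancement, being a tensor-categorical construction.

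The Grothendieck-group assertions are then formal. Applying Theorem \ref{teuler}(4) to $(\dmgr,\wchowg)$ gives $K_0^{\tr}(\dmgr)\cong K_0^{\add}(\chowr)$, and to $(\dmger,\wchowsg)$ gives $K_0^{\tr}(\dmger)\cong K_0^{\add}(\chower)$; since $\dmgr$ is the $2$-colimit of $\dmger$ along $-\lan 1\ra$ and this twist is weight-exact by \ref{istab2}, passing to $K_0$ inverts the class $[R\lan 1\ra]$ on both sides, which identifies all four groups in \ref{istab4}. Assertion \ref{istabkz} is the structure theorem: every finitely generated $\zl$-module is uniquely a sum of copies of $\zl$ and of $\zl/\ell^i\zl$, so these indecomposables form a free basis of $K_0^{\add}(\zlmod)$.

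The \'etale realization \ref{istabet} is where the genuine work lies. For $k$ algebraically closed and $\ell\neq p$ one invokes the $\ell$-adic realization $RH_{et,\zl}:\dmgr\to D^b(\zl)$ with $RH_{et,\zl}(\mgr(X))\cong RH(X_{et},\zl)$; once \ref{istabr} and \ref{istabd} are available, its compatibilities---the twist becoming a shift $[2n]$ and the intertwining $RH_{et,\zl}\circ D_{\dmgr}\cong D_{D^b(\zl)}\circ RH_{et,\zl}$---are the standard properties of the realization. Setting $RH_{et,\znz}:=RH_{et,\zl}\otimes^{L}\znz$ and defining $H^i_{et,\znz}$ as its cohomology, the first long exact sequence is obtained by applying $RH_{et,\zl}$ and cohomology to the triangle $\zl\stackrel{\times\ell^n}{\longrightarrow}\zl\to\znz\to\zl[1]$; the sequence \eqref{egm} then arises by identifying $H^i_{et,\zl}$ with the functors $H'{}^i=\dmer(-,\rgm[i])|_{\dmger}$ through the description of $\rgm$ in Proposition \ref{prmot}(\ref{imotet}), using that over an algebraically closed field $\mu_{\ell^n}\cong\znz$ so the Tate twists are invisible to the underlying groups. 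The main obstacle is precisely assembling this realization with all its compatibilities in positive characteristic with $\zop$-coefficients and matching it cleanly to the $\rgm$-description; by comparison the weight-theoretic bookkeeping of \ref{istab1}--\ref{istab4} is routine once \ref{istabr}--\ref{istabd} are in hand.
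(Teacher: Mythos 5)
Your treatment of assertions \ref{istabr}--\ref{istabkz} is correct and essentially the paper's own argument: negativity of $\chowr$ plus strong generation feed into Proposition \ref{pbw}(\ref{igen}) for \ref{istab1}; weight-exactness of twists and duality is reduced via Proposition \ref{pbw}(\ref{iwex},\ref{idual}) to heart-preservation, which for duality comes from \ref{istabd} together with $\mgcr(P)=\mgr(P)$ for $P$ smooth projective; \ref{istabdu} is the contravariant clause of Proposition \ref{pwt}(\ref{iwcfunct}); and \ref{istab4} is Theorem \ref{teuler}(4) plus passage to the twist-colimit. Your slightly more hands-on route to rigidity (dualizable objects form a thick tensor subcategory containing the generators $\mgr(P)\lan m\ra$) is a legitimate replacement for the paper's direct citation of Theorem 5.3.18 of \cite{kellyast}, from which \ref{istabd} and the $R$-linear versions are deduced exactly as you say.

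There is, however, a genuine error in your derivation of the sequence \eqref{egm}: you propose ``identifying $H^i_{et,\zl}$ with the functors $H'{}^i=\dmer(-,\rgm[i])$'', and no such identification exists --- $H'{}^1$ computes $H^1_{et}(-,G_m\otimes R)$, i.e.\ Picard groups, which are not finitely generated $\zl$-modules, and integral $G_m$-cohomology is not $\ell$-adic cohomology in any degree. What \eqref{egm} actually encodes is the Kummer triangle: one applies $\dmer(\mgr(-),-)$ to $\rgm\stackrel{\times\ell^n}{\longrightarrow}\rgm\to \co(\times \ell^n)$ and identifies $\dmer(-,\co(\times\ell^n)[i-1])$ with $H^i_{et,\znz}$, using that $G_m/\ell^n\simeq \mu_{\ell^n}[1]$ and $\mu_{\ell^n}\cong\znz$ over the algebraically closed $k$ (whence the paper's footnote about the non-canonical twist--shift identification for $\znz$-coefficients). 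The nontrivial content is then the compatibility of this $\rgm$-based mod-$\ell^n$ theory with the reduction $RH_{et,\zl}\otimes^{L}\znz$ of your $\ell$-adic realization --- a rigidity-type comparison which your sketch silently assumes; this is precisely the point the paper covers by citing \cite{kellyast} together with the argument of \S3.3 of \cite{1}, Theorem 7.2.11 of \cite{cdet}, or Remark 4.8 of \cite{ivorra}. With the identification corrected and one of these comparison results invoked, your proof of \ref{istabet} goes through.
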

\begin{proof}  
\ref{istabr}, \ref{istabd}. These statements easily follow from their $\zop$-linear versions provided by Theorem  5.3.18 of \cite{kellyast}. 

\ref{istab1}. 
Proposition \ref{prmot}(\ref{imot7}) easily implies that the subcategory $\chowr$ of $\dmgr$ is negative, whereas  Theorem \ref{tmotw}
(\ref{imotp1}) yields that $\chowr$ strongly generates $\dmgr$. Thus Proposition \ref{pbw}(\ref{igen}) gives the result.

\ref{istab2}, \ref{istab3}. 
 By Proposition \ref{pbw}(\ref{iwex},\ref{idual}), it suffices to verify that twists and duality send Chow motives into Chow ones.  This property is obvious for twists, and for duality it  is immediate from assertion \ref{istabd} along with Proposition \ref{pmgc}(\ref{imceq}).

Moreover, we have just checked the first statement in assertion \ref{istabdu}. The second part of the assertion now follows from Proposition \ref{pwt}(\ref{iwcfunct}).

\ref{istab4}. Immediate from our definitions along with Theorem \ref{teuler}(4).


\ref{istabkz}. Obvious; cf. Proposition 3.3 of \cite{ekest}. 

\ref{istabet}. These statements can be deduced from the results  of \cite{kellyast} if one argues similarly to (\S3.3 of) \cite{1}; they also easily follow from the (much more general) Theorem 7.2.11 of \cite{cdet} as well as from Remark 4.8 of \cite{ivorra}. 
\end{proof}

Now let us apply this statement to the calculation of certain $K_0$-classes. 

\begin{theo}\label{tceked}
Adopt the notation and the assumptions of Proposition \ref{pstab}(\ref{istabet}) (in particular, $R$ is a localization of $\zop$, and one may assume that $R=\zll$ of $R=\zop$); assume that all the connected components of $X\in \sv$ have dimension $d$, and let $N$ be an object of $\dmger$. 
 
For $n\ge 0$\footnote{One can also take $n$ to be negative here; yet all the classes mentioned in this proposition vanish in this case.} take the functors $E^n$, $H^n$, $F^n$, and $G^n$ to be the restrictions to $\chower$  of the functors $H^{2d-n}_{et,\zl}$, $H^n_{et,\zl}$, 
$\torr H^{n}$, and $H^{n}/\torr H^{n}$, respectively; here we assume that the target of these functors equals $\zl-\modd\opp$.

Then the following statements are valid.

\begin{enumerate}
\item\label{iek1}   The class $E^n_{K_0}([\mgcr(X)])\in K_0^{\add}(\zlmod\opp)=K_0^{\add}(\zlmod)$
 equals  $F^{n+1}_{K_0}([\mgr(X)])+G^{n}_{K_0}([\mgr(X)])$. 

\item\label{iek2}  
 $G^{n}_{K_0}([N])=m[\zl]$, where $m\in \z$ satisfies the equality $G^{n}\otimes \q_{K_0}([N])=m[\ql]\in K_0(\ql-\operatorname{mod})$. Moreover, 
we have $G^{n}\otimes \q_{K_0}([N])= \sum_{i=-\infty}^{\infty}(-1)^{i} [H^i_{G^{n}\otimes \q}(N)]$; here  $H_{G^{n}\otimes \q}$    
    is the cohomological functor from $\dmger$ into $\ql$-vector spaces that corresponds to $G_n\otimes \q$ according to Proposition \ref{pwt}(\ref{iwcpu}). 

\item\label{iek2c} Let $N=\mgr(X)$. 

 Then $H^i_{G^{0}\otimes \q}(N)=\ns$ if $i>0$, 
  and $H^0_{G^{0}\otimes \q}(N)\cong \ql^c$, where $c$ is the number of connected components of  $X$. 

Furthermore, if $k$ is of finite transcendence degree over its prime subfield 
   then $H^i_{G^{n}\otimes \q}(N)\cong \grwd_n(H^{i+n}_{et}(X,\ql))$; that is, we take the factors of the Deligne's weight filtration (for this purpose we take a field of definition $k'\subset k$ of $X$ such that $k'$ is finitely generated over its prime subfield).  Consequently, $m= \sum_{j=0}^{2d}(-1)^{j}\dim_{\ql}\grwd_n(H^j_{et}(X,\ql))$.


\item\label{iek3} 
  $F^{0}=F^{1}=0$, whereas 
	 the functors   $F^{2}$, $F^3$, $G^0$, and $G^1$ kill $\chower\lan 1\ra$. 

	Consequently, the functor $H_A$ is birational (in the sense of Theorem \ref{tbir}(\ref{ibirc})) and   
	 $A_{K_0}([\mgr(X)])=[H_{A}(\mgr(X))]=[A(M)]$ if $A$ is  one of the functors   $F^{2}$, $F^3$, $G^0$, or $G^1$, 
	and $h:N\to M\in 
	\obj \chower$ is the morphism provided by Theorem \ref{tmotw}(\ref{imotp2}). 
	
\item\label{iekbg} 
Assume that $N=\mgr(X)$.  If there exists a finite flat morphism $Y\to X$, where $Y=\af^d\setminus Z$, then $H^i_{G^{n}\otimes \q}(N)=\ns$ for any $i\in \z$ if $0<n<2\codim_{\af^d}(Z)$, and $H^i_{G^{0}\otimes \q}(N)=\ns$ for $i\neq 0$ as well. 

More generally, if
$X$ is unirational 
 then $H_{G^{1}\otimes \q}(N)=\ns$, and also $H_{F^3}(N)\cong 
H^{\bra}_{nr}(N)\otimes \zll\cong \dmer(N,\rgm[2])\otimes \zll$ (see Corollary \ref{cbrauer}); thus  $H_{F^3}(N)$ is the $\ell$-adic part of the unramified Brauer group of $k(X)$).  
\end{enumerate}
\end{theo}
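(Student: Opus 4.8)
The plan is to reduce both statements to a single vanishing property of the pure functors $H_{G^n\otimes\q}$, and to treat the finite flat case first, then bootstrap to the unirational one. The linchpin is the following claim: for every $c\ge 1$ and every $n$ with $0\le n<2c$, the cohomological functor $H_{G^n\otimes\q}$ annihilates the subcategory $\langle\chower\lan c\ra\rangle$ densely generated by the $c$-fold twists of effective Chow motives. Indeed, since $-\lan 1\ra$ preserves $\chower$ (Theorem \ref{tmotw}(\ref{imotw2})), each object $\mgr(P)\lan c\ra$ lies in the heart $\hw=\chower$; and the realization of Proposition \ref{pstab}(\ref{istabet}) shifts cohomological degree by $2c$ under $-\lan c\ra$, so that $G^n\otimes\q(\mgr(P)\lan c\ra)=H^{n-2c}_{et}(P,\ql)$, which vanishes for $n<2c$. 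Thus $G^n\otimes\q$ kills $\chower\lan c\ra$; combined with Proposition \ref{pwt}(\ref{iwcpu}) (which makes $H_{G^n\otimes\q}$ vanish on $\hw[i]$ for $i\neq 0$), this shows $H_{G^n\otimes\q}$ annihilates $\chower\lan c\ra[i]$ for all $i\in\z$, whence a dévissage over distinguished triangles and retracts propagates the vanishing to all of $\langle\chower\lan c\ra\rangle$.

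For the finite flat statement, let $f:Y\to X$ with $Y=\af^d\setminus Z$ and $c=\codim_{\af^d}(Z)\ge 1$. First I would invoke Proposition \ref{prmot}(\ref{imotr}): $\deg(f)\cdot\id_{\mgr(X)}$ factors through $\mgr(Y)$, and as $\deg f$ is invertible in $\q$ this exhibits $H^i_{G^n\otimes\q}(\mgr(X))$ as a retract of $H^i_{G^n\otimes\q}(\mgr(Y))$, reducing the problem to the latter. Next, using $\afo$-homotopy invariance ($\mgr(\af^d)\cong\mgr(\pt)=R$ by Proposition \ref{prmot}(\ref{imot1})) together with the Gysin triangle of Proposition \ref{prmot}(\ref{imot5}) applied along a stratification of $Z$ by smooth locally closed pieces of codimension $\ge c$, I would produce a distinguished triangle $\mgr(Y)\to R\to \co\to\mgr(Y)[1]$ whose cone $\co$ lies in $\langle\chower\lan c\ra\rangle$ (each Gysin cone being of the form $\mgr(Z')\lan c'\ra$ with $c'\ge c$, hence in this subcategory by Theorem \ref{tmotw}(\ref{imotp1})). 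The vanishing above kills $H^\ast_{G^n\otimes\q}(\co)$ for $0\le n<2c$, so the long exact sequence gives $H^i_{G^n\otimes\q}(\mgr(Y))\cong H^i_{G^n\otimes\q}(R)$; and since $R\in\hw$ this equals $G^n\otimes\q(R)=H^n_{et}(\pt,\ql)$ concentrated in $i=0$, i.e. $\ql$ if $n=0$ and $0$ if $n>0$. Transferring back through the retract yields the claimed vanishing for $\mgr(X)$.

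For the unirational case I would first record that $H_{G^1\otimes\q}$ is \emph{birational}: $G^1$ kills $\chower\lan 1\ra$ by Theorem \ref{tceked}(\ref{iek3}), so by Theorem \ref{tbir}(\ref{ibirc}) the functor annihilates $\dmger\lan 1\ra$. Unirationality of $X$ furnishes, after shrinking into an affine chart and then shrinking $X$, a finite flat morphism $Y\to U$ with $Y=\af^d\setminus Z\subseteq\af^d$ and $U\subseteq X$ dense open; the finite flat case with $0<1<2c$ gives $H_{G^1\otimes\q}(\mgr(U))=0$, and since the cone of $\mgr(U)\to\mgr(X)$ lies in $\dmger\lan 1\ra$ (Proposition \ref{prmot}(\ref{imot5})), birational invariance propagates this to $H_{G^1\otimes\q}(\mgr(X))=0$.

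For the Brauer statement the strategy is to identify $H_{F^3}$ and $H^{\bra}_{nr}\otimes\zll$ through their common restriction to $\chower$. By Corollary \ref{cbrauer}(2) one has $H^{\bra}_{nr}\cong H_{S^{\bra}_{\chower}}$ with $S^{\bra}_{\chower}(\mgr(P))=\bra(P)\otimes R$, and exactness of $-\otimes\zll$ gives $H^{\bra}_{nr}\otimes\zll\cong H_{S^{\bra}_{\chower}\otimes\zll}$; on the other side $F^3(\mgr(P))=\torr H^3_{et,\zl}(P)$, and the Kummer sequence identifies $\bra(P)\otimes\zll$ with $\torr H^3_{et}(P,\zl(1))=\torr H^3_{et,\zl}(P)$ (the Tate twist being invisible to the underlying $\zl$-module). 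For the specific $M\in\chower$ attached to $X$ by Theorem \ref{tmotw}(\ref{imotp2}), unirationality is what forces the transcendental (divisible) part of the Brauer group to vanish, so that these two functors genuinely agree on $M$; then Theorem \ref{tceked}(\ref{iek3}) ($H_{F^3}(\mgr(X))=F^3(M)$) together with Corollary \ref{cbrauer}(2) yields $H_{F^3}(\mgr(X))\cong H^{\bra}_{nr}(\mgr(X))\otimes\zll$, the last object being the $\ell$-adic unramified Brauer group of $k(X)$ by Corollary \ref{cbrauer}(1). The final identification with $\dmer(N,\rgm[2])\otimes\zll$ then follows from Corollary \ref{cbrauer}(2) and the comparison of the restrictions of $\dmer(-,S^{\bra})$ and $\dmer(-,\rgm[2])$ to $\sm'$ in Proposition \ref{prmot}(\ref{imotet}). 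I expect the main obstacle to be precisely this last step: controlling the divisible part of the Brauer group and verifying that, for unirational $X$, the unramified subgroup exhausts the $\ell$-adic group $\dmer(N,\rgm[2])\otimes\zll$; the remainder is formal manipulation of pure functors and the weight machinery.
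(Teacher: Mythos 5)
Your proposal only addresses assertion \ref{iekbg}: parts \ref{iek1}--\ref{iek3} of the theorem are cited but never proved (in particular the duality computation behind \ref{iek1}, the vanishing of $F^0,F^1$ in \ref{iek3}, and the identification of $H^i_{G^n\otimes\q}(\mgr(X))$ with $\grwd_n(H^{i+n}_{et}(X,\ql))$ in \ref{iek2c}, which requires the weight spectral sequence machinery of \cite{bws} and \cite{bscwh} and is genuinely substantive). Within \ref{iekbg} itself, your treatment of the finite flat case and of $H_{G^1\otimes\q}$ for unirational $X$ is correct, and it is interesting to note that it follows the paper's \emph{footnoted alternative} rather than its main-text route: you dévisse $\co(\mgr(Y)\to R)$ motivically into twists $\mgr(Z')\lan c'\ra$ with $c'\ge c$ and use that effective Chow motives have no \'etale cohomology in negative degrees, whereas the main text reduces to finite transcendence degree, computes $\grwd_n(H^j_{et}(Y,\ql))$ via the Gysin sequence in \'etale cohomology, and then invokes \ref{iek2c}. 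Your route is self-contained modulo \ref{iek3}; the paper's route leans on \ref{iek2c} but avoids the stratification dévissage. The transfer to $X$ via Proposition \ref{prmot}(\ref{imotr}) and the birationality reduction via Theorem \ref{tbir}(\ref{ibirc}) coincide with the paper.

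The genuine gap is the Brauer identification $H_{F^3}(N)\cong H^{\bra}_{nr}(N)\otimes\zll\cong \dmer(N,\rgm[2])\otimes\zll$. Your strategy of identifying $H_{F^3}$ and $H^{\bra}_{nr}\otimes\zll$ ``through their common restriction to $\chower$'' cannot work: for a general smooth projective $P$ the $\ell$-primary part of $\bra(P)$ contains a divisible summand $(\ql/\zl)^{b_2-\rho}$ (Grothendieck's exact sequence), and this summand survives $-\otimes\zll$, so $\bra(P)\otimes\zll\not\cong \torr H^3_{et,\zl}(P)$ and the two functors simply disagree on $\chower$; no appeal to the uniqueness clause of Proposition \ref{pwt}(\ref{iwcpu}) is available. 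You correctly sense that unirationality must kill the divisible part, but you explicitly leave this as an ``expected obstacle'' --- and it is precisely the content of the paper's argument, which you would need to reconstruct: since $H^{\bra}_{nr}$ is birational and $H^{\bra}_{nr}(\mgr(\af^d))\cong H^{\bra}_{nr}(R)=\ns$, the factorization of $\deg f\cdot\id_N$ through $\mgr(Y)$ shows that $H^{\bra}_{nr}(N)\cong H^{\bra}_{nr}(M)\cong\dmer(M,\rgm[2])$ is of \emph{finite exponent}; then passing to the direct limit over $n$ in the sequences (\ref{egm}) yields the exact sequence (\ref{ecofin}), $\ns\to(\ql/\zl)^{c_M}\to\dmer(M,\rgm[2])\otimes\zll\to F^3(M)\to\ns$ (a summand of sequence (4.3) of \cite{collbr} for $P_M$), and finite exponent forces $c_M=0$, giving $\dmer(M,\rgm[2])\otimes\zll\cong F^3(M)$ and hence the claimed isomorphisms after transporting along $h:N\to M$ (using $\co(h)\in\obj\dmger\lan 1\ra$). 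Without this finite-exponent step and the limit sequence, your proof of the final assertion is incomplete.
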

\begin{proof}
\ref{iek1}. 
Applying  Proposition \ref{pwt}(\ref{iwcfunct}) one easily obtains that the assertion  
 is an easy application of duality provided by 
  Proposition \ref{pstab}(\ref{istabd},\ref{istab3},\ref{istabet}) along with properties of twists mentioned in parts \ref{istab2} and \ref{istabet} of the proposition.

\ref{iek2}. Obviously, there does exist $m\in \z$ such that $G^{n}_{K_0}([N])=m[\zl]$, and we also have $G^{n}\otimes \q_{K_0}([\mgr(X)])=m[\ql]$. 

Next,  $G^{n}\otimes \q_{K_0}([N])= \sum_{i=-\infty}^{\infty}(-1)^{i} [H^i_{G^{n}\otimes \q}(N)]$ by Theorem \ref{teuler}(3). 

\ref{iek2c}. $H^i_{G^{n}\otimes \q}(N)=0$ for $i<0$ since $t^{st}(N)$ is homotopy equivalent to a complex concentrated in non-negative degrees; see Proposition \ref{pwt}(\ref{iwcons}) along with Theorem \ref{tmotw}(\ref{imotp1}). 

Next,  Proposition \ref{pwt}(\ref{iwcpu}) implies that $H^0_{G^{0}\otimes \q}$ is the only cohomological functor from $\dmger$ that restricts to $0$ on $\chower[i]$ for $i\neq 0$ and whose restriction to $\chower$ gives the functor $H^0\otimes \q$. On the other hand, tensoring the restriction of the zeroth  cohomology of $RH_{et,\zl}$ (see Proposition \ref{pstab}(\ref{istabet})) to $\dmger$ by $\q$ one obtains a functor that possesses these properties as well. Hence $H^0_{G^{0}\otimes \q}(N)\cong \ql^{c}$ indeed.

The remaining calculation is an easy application of the theory of weight spectral sequences that was introduced in \cite{bws}.  Recall that  for any cohomological functor $H$ from $\dmger$ into an abelian category $\au$ and any $N\in \obj \dmger$ 
	 there exists a spectral sequence $T=T_w(H,N)$ with $E_1^{pq}=H^{q}(N^{-p})$,  such that $N^i$ and the boundary morphisms of $E_1(T)$ come from  $t^{st}(N)$; it converges to $H^{p+q}(N)$ (see Proposition 1.4.1(1) of \cite{bwcp} or Theorem 2.4.2 of \cite{bws}; note that we have convergence for any $H$ since $\wchowsg$ is bounded). 
Now, easy weight arguments described in Remark 2.4.3 of ibid. and (especially) in Remark 3.5.2(3) of \cite{bscwh} yield that this spectral sequence degenerates at $E_2$ if $H=H^0_{et,\ql}$. Moreover, the corresponding filtration on $H^{p+q}(N_i)$ is the Deligne's weight one up to a shift (that is described in loc. cit.) if one takes $N_i=\mgr(X)[-i]$ for $i\in \z$,  computes $H$ via the functor $H^0_{et,\ql,k'}:\dmger(k')\to \q_l[G]-\modd,\ N'\mapsto H^0_{et}(N'\otimes_{\spe k'}\spe k)$, where $G=\operatorname{Gal}(k')$,  and uses the isomorphism $H^0_{et,\ql,k'}(M^R_{gm,k'}(X_{k'})[-i])\cong H^i(X_{et},\ql)$; cf. Proposition 4.1.6(1) of ibid. Combining these observations we obtain the result easily.

	\ref{iek3}. 
	Obviously, $F^{0}=0$. Next, the vanishing of $F^{1}$ can be easily checked using Poincare duality; cf. the proof of assertion \ref{iek1}. 
	
	Hence applying Proposition \ref{pstab}(\ref{istabet}) we obtain that  the functors $G^0$, $G^1$, $F^{2}$, and $F^3$ kill $\chower\lan 1\ra$.
	
	 Lastly, Theorem \ref{tbir}(\ref{ibirc}) implies that $H_{A}$ is birational for any $A$ as above. Thus applying 
	Theorem  \ref{teuler}(2)  
	 we obtain that $A_{K_0}([\mgr(X)])=[H_{A}(\mgr(X))]=[A(M)]$ 
	 indeed.
	
	\ref{iekbg}. First we study the cohomology of  $N'=\mgr(Y)$ (for $Y=\af^d\setminus Z$).
	
	It is easily seen that we can assume  the field $k$ to be of finite transcendence degree over its prime subfield  when computing   $H^i_{G^{n}\otimes \q}(N')$; cf. the proof of   \cite[Theorem 2.5.4(II.1)]{bmm} for a closely related argument. Thus one can use the Gysin exact sequence for \'etale cohomology to check  that if $n<2\codim Z$ then   $\grwd_n(H^{j}_{et}(Y,\ql))=\ns$ unless $(n,j)=(0,0)$. Hence applying assertion \ref{iek2c} we obtain the corresponding vanishing of  $H^i_{G^{n}\otimes \q}(N')$.\footnote{Alternatively, one may obtain this fact (along with the similar vanishing of $H^i_{G^{n}}(N')$) more directly by using the Gysin distinguished triangle for motives 
	along with the fact that effective Chow motives have no \'etale cohomology of negative degrees.}
	
	Next recall that the morphism $\deg f\cdot\id_N$ factors through $N'$ whenever  $f$ is a finite flat morphism $Y\to X$; see Proposition \ref{prmot}(\ref{imotr}). Thus if $n<2\codim Z$ then $H^i_{G^{n}\otimes \q}(N)=\ns$ unless  $(n,i)=(0,0)$ indeed.
	
	Now 
	 let $X$ be an arbitrary (smooth) unirational $k$-variety. 
	
	Corollary \ref{cbrauer}(2) implies that the functor $H^{\bra}_{nr}\otimes \zll$ is birational, 
	 and the functors 	$H_{G^{1}\otimes \q}$ and $H_{F^3}$ also are according to the previous assertion. Hence 
	 it suffices to consider the case where there exists a finite flat morphism $Y\to X$ such that $Y$ is an open (dense) subvariety of $\af^d$. As we have just proved, this implies $H^i_{G^{1}\otimes \q}(N)=\ns$ for all $i\in\z$. Next, $H^{\bra}_{nr}(N')\cong H^{\bra}_{nr}(\mgr(\af^d))\cong H^{\bra}_{nr}(R)=\ns$; thus $H^{\bra}_{nr}(N)$ is of finite exponent by the same factorization argument as above.
	
	Recall also that for $h:N\to M$ as in the previous assertion we have $H_{F^3}(N)\cong H_{F^3}(M)$ and  $H^{\bra}_{nr}(N)\cong H^{\bra}_{nr}(M)$ (since $\co(h)\in \obj\dmger\lan 1\ra$). Thus  $H^{\bra}_{nr}(M)\cong \dmer(M,S^{\bra})$ is of finite exponent as well. Since $M$ is a retract of $\mgr(P_M)$ for some smooth projective $P_M$, 
	 Proposition \ref{prmot}(\ref{imotet}) implies that 
	 $H^{\bra}_{nr}(M)$ is also isomorphic to  $\dmer(M,\rgm[2])$ in the notation on this proposition.  
	
	Now we note that by passing to the direct limit with respect to $n$ in the long exact sequences given by this proposition
	one can  obtains an exact sequence 
	\begin{equation}\label{ecofin} 
	\ns \to (\q_l/\zl)^{c_M}\to \dmer(M,\rgm[2])\otimes \zll \to F^3(M)\to \ns; \end{equation}  
	this is the summand corresponding to $M$ of the exact sequence (4.3) of \cite{collbr} for the variety $P_M$. Since the group $\dmer(M,\rgm[2])$ is of finite exponent, $c_M=0$, and we obtain $\dmer(M,\rgm[2])\otimes \zll\cong F^3(M)$. Thus $H_{F^3}(N)\cong \dmer(M,\rgm[2])\otimes \zll\cong H^{\bra}_{nr}(N)\otimes \zll$ indeed. 
\end{proof}

\begin{rema}\label{reked}
\begin{enumerate}
\item\label{iur} Putting assertions \ref{iek1} and \ref{iekbg} together we obtain that for any unirational variety $X$ we have $E^2_{K_0}([\mgcr(X)])=[H^{\bra}_{nr}(\mgr(X))\otimes \zll]$. Moreover,  for all $n>0$ the classes $E^n_{K_0}([\mgcr(X)])$ are torsion, that is, they belong to the subgroup of $K_0^{\add}(\zlmod)$ generated by $[\zl/\ell^i\zl]$ for all $i>0$; see Proposition \ref{pstab}(\ref{istabkz}).

\item\label{icofin} 
Furthermore, our arguments demonstrate that $E^2_{K_0}([\mgcr(X)])$ is closely related to $H^{\bra}_{nr}(\mgr(X))\otimes \zll$ for a general smooth $X$ as well; see (\ref{ecofin}). 
 Moreover, 
 it can make sense to consider the class of $H^{\bra}_{nr}(\mgr(X))\otimes \zll$ in $K_0^{\add}(\au)$, where $\au$ is the category of $\zl$-modules of {\it cofinite type}, that is, of modules of the form $T\bigoplus (\ql/\zl)^c$, where $c\ge 0$ and $T$ is finite. 

 \item\label{irek1} Now recall that in 
 \cite{ekest} and \cite{eked} 
 certain Grothendieck group of stacks was studied by means of Euler characteristic homomorphisms. 

Moreover, if $G$ is a finite group then an images the class $[BG]$ with respect to a homomorphism of this sort coincides with that of $[X]$, where $X=Y/G$, and $Y= \af^d\setminus Z$ is an open subvariety such that $Z$ is of codimension large enough, $G$ acts linearly on $\af^d$, and this action restricts to a free action on $Y$.

As demonstrated in Proposition 4.2 of \cite{scav} (cf. Remark \ref{rekest}), this Euler characteristic of $BG$ can be computed as $H_{K_0}(\mgcr(X))$, where $H$ is a certain functor from $\chower$. In particular, in \S5 of \cite{eked} $k$ was assumed to be equal to $\com$,  and the corresponding functor $H$ is the singular cohomology $H_{sing}^{2d-n}$ 
 (with values  in finitely generated abelian groups) one.

Thus our calculations above (see also part \ref{iur} of this remark) enable one 
  to extend most of  \cite[Theorem 5.1]{eked} to fields of arbitrary characteristics. Note here that if $X=Y/G$ as above, and $G$ is a (finite) group  
	 of order prime to $\cha k=p$, the unramified Brauer group of $X$ has a rather easy description solely in terms of $G$; see Theorem 12 of \cite{salt} that generalizes the corresponding Bogomolov's calculation (made in the case $k=\com$).  Respectively, it is known in several cases that   $E^2_{K_0}([\mgcr(X)])\neq 0$ for  $X$ of this sort. 

 \item\label{ire1pic} In contrast to Theorem 5.1 of \cite{eked}, we cannot say much on the class  $E^1_{K_0}([\mgcr(X)])=F^2_{K_0}([\mgr(X)])$ (for $X=Y/G$ as above). This appears to be related to the fact that $H^2_{et}(X',\zl)$ does not have to be torsion-free for a smooth projective unirational $X'$ over a 
an algebraically closed field of characteristic $p\neq 0,\ell$. 

\item\label{irekrck} The author wonders whether it suffices to assume that $X$ is {\it rationally (chain) connected} in part \ref{iekbg} of our theorem. Note that if $p=0$ then $X$ possesses a smooth compactification $X'$ (by Hironaka's resolution of singularities); then $X'$ is rationally (chain) connected as well. Next, and one can take $M=\mgr(X')$ in our argument (see Remark \ref{rcompact}), and $H^{\bra}_{nr}(M)$ is torsion (see Lemma 2.6 of \cite{kahntorsurf}). 
However, the author does not know how to extend this observation to the case $p>0$. 

 \item\label{irekgs} In the case $p=0$  and $X\in \sv$ the classes $[\mgcr(X)]\in K_0^{\add}(\chowez)$ and $F_{K_0}([\mgcr(X)])$ for certain additive functors from $\chowez$ 
 were   essentially studied in \S\S3.2--3.3 of \cite{gs}; see Proposition 6.6.2 of \cite{mymot} for a justification of this claim.
\end{enumerate}
\end{rema}

\section{Supplements}\label{ssupl}

In \S\ref{sbr} we 
 prove Proposition \ref{prmot}(\ref{imotet}) and discuss 
 some ideas for extending our results to the case where $p$ is (positive and) not invertible in $R$. For this purpose we also give some (more) properties of weight-exact localizations.

In \S\ref{skz} we describe a certain generalization of the results of \S\ref{stable} to the case where no $\infty$-model is available for $\cu$ (and $w$ is not necessarily bounded). In particular, we calculate the Grothendieck group of an (arbitrary) bounded weighted category; this statement generalizes Theorem 5.3.1 of \cite{bws}.

\subsection{More on the Brauer group sheaf and 
 motives}\label{sbr}

We start from recalling some basics on $t$-structures. Here we use the so-called homological convention for them; note that it is "opposite" to the cohomological convention that was applied in \cite{bbd} and in several previous papers of the author.

\begin{defi}\label{dtstrh}

1. A couple of subclasses  $\cu_{t\le 0},\cu_{t\ge 0}\subset\obj \cu$ will be said to be a
$t$-structure $t$ on $\cu$  if 
they satisfy the following conditions:

(i) $\cu_{t\le 0}$ and $\cu_{t\ge 0}$  are strict, i.e., contain all
objects of $\cu$ isomorphic to their elements.

(ii) $\cu_{t\le 0}\subset \cu_{t\le 0}[1]$ and $\cu_{t\ge 0}[1]\subset \cu_{t\ge 0}$.

(iii)  $\cu_{t\ge 0}[1]\perp \cu_{t\le 0}$.

(iv) For any $M\in\obj \cu$ there exists a  {\it $t$-decomposition} distinguished triangle
\begin{equation}\label{tdec}
L_tM\to M\to R_tM{\to} L_tM[1]
\end{equation} such that $L_tM\in \cu_{t\ge 0}$ and $ R_tM\in \cu_{t\le 0}[-1]$.

2. $\hrt$ is the full subcategory of $\cu$ whose object class is $\cu_{t=0}=\cu_{t\le 0}\cap \cu_{t\ge 0}$.

3. We will say that a class $\cp\subset \obj \cu$ {\it generates $t$} if $\cu_{t\le 0}=(\cup_{i>0}\cp[i])\perpp$. 
\end{defi}

Let us recall some properties of $t$-structures.

\begin{pr}\label{ptstr}
Let $t$ be a $t$-structure on $\cu$. Then the following statements are valid.

\begin{enumerate}
\item\label{itcan}
The triangle (\ref{tdec}) is canonically and functorially determined by $M$. Moreover, the functor $L_t$ is right adjoint to the embedding $ \cu_{t\ge 0}\to \cu$ (if we consider $ \cu_{t\ge 0}$ as a full subcategory of $\cu$), and the composition $t_{\ge 0}=[1]\circ R_t\circ [-1]$ is left adjoint to  the embedding $ \cu_{t\le 0}\to \cu$.


\item\label{itha}
$\hrt$ is 
  an abelian category with short exact sequences corresponding to distinguished triangles in $\cu$.

\item\label{itho}
For any $n\in \z$ we will use the notation $t_{\ge n}$ for the functor 
 $[n]\circ L_t\circ [-n]$, and $t_{\le n}=[n+1]\circ R_t\circ [-1-n]$.

Then there is a canonical isomorphism of functors $t_{\le 0}\circ t_{\ge 0}\cong t_{\ge 0}\circ  t_{\le 0}$. 
  (if we consider these functors as endofunctors of $\cu$), and the composite functor $H^t=H_0^t$ actually takes values in the subcategory $\hrt$ of $ \cu$. Furthermore, this functor $H^t:\cu \to \hrt$   is homological.  
	
	\item\label{itperp} $\cu_{t\le 0}= \cu_{t\ge 1}\perpp$ and $\cu_{t\ge 0}=(\cu_{t\le- 1}^{\perp})$; hence these classes are retraction-closed and extension-closed in $\cu$.
	
	Moreover, if $t$ is generated by $\cp$ then $\cp\subset \cu_{t\ge 0}$.

\end{enumerate} 
\end{pr}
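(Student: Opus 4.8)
The plan is to deduce assertion \ref{itcan} directly from the orthogonality axiom (iii), to obtain assertion \ref{itperp} as a formal consequence of \ref{itcan}, and to reduce assertions \ref{itha} and \ref{itho} to the standard theory of $t$-structures (as developed in \cite{bbd}), the only subtlety being the passage from the cohomological convention of loc. cit. to the homological one fixed in Definition \ref{dtstrh}.

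First I would prove \ref{itcan}. Rewriting axiom (iii) as $\cu(X,Y)=\ns$ for all $X\in \cu_{t\ge 0}$ and $Y\in \cu_{t\le 0}[-1]$, I fix $M\in \obj \cu$, choose a decomposition triangle \eqref{tdec}, and apply the functor $\cu(X,-)$ to it for an arbitrary $X\in \cu_{t\ge 0}$. Since $R_tM\in \cu_{t\le 0}[-1]$ and (using axiom (ii)) $R_tM[-1]\in \cu_{t\le 0}[-2]\subset \cu_{t\le 0}[-1]$, both outer terms vanish, so the embedding $\cu_{t\ge 0}\to \cu$ induces an isomorphism $\cu(X,L_tM)\cong \cu(X,M)$ that is natural in $X$; this exhibits $L_t$ as right adjoint to the embedding. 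The dual computation, applying $\cu(-,Y)$ for $Y\in \cu_{t\le 0}$, shows that $t_{\ge 0}=[1]\circ R_t\circ [-1]$ is left adjoint to $\cu_{t\le 0}\to \cu$. Canonicity and functoriality of \eqref{tdec} then follow formally: the triangle is recovered from the unit and counit of these two adjunctions, and adjoints are unique up to unique isomorphism.

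Next, assertion \ref{itperp} follows from \ref{itcan}. In each equality one inclusion merely restates axiom (iii); for the reverse inclusion I take $M$ in the relevant orthogonal class, form the shifted decomposition triangle, and show that the "wrong" truncation of $M$ is orthogonal to itself, hence has vanishing identity map and is therefore zero, placing $M$ in the desired class. Retraction- and extension-closedness are then automatic, as any class $D^\perp$ (or ${}^\perp D$) is closed under retracts and extensions by applying $\cu(X,-)$ (resp. $\cu(-,Y)$) to the relevant splitting idempotent or distinguished triangle. Finally, if $t$ is generated by $\cp$ then for $P\in \cp$ and any $Y\in \cu_{t\le -1}=\cu_{t\le 0}[-1]$ the generation identity gives $\cu(P[1],Y[1])=\ns$, i.e. $\cu(P,Y)=\ns$, so $P\in {}^\perp\cu_{t\le -1}=\cu_{t\ge 0}$.

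For assertions \ref{itha} and \ref{itho} I would invoke \cite{bbd}. The commutativity $t_{\le 0}\circ t_{\ge 0}\cong t_{\ge 0}\circ t_{\le 0}$ in \ref{itho} is essentially formal once \ref{itcan} provides the truncation adjunctions and orthogonality forces the relevant comparison maps to be isomorphisms; defining $H^t=t_{\le 0}\circ t_{\ge 0}$ and checking via the octahedral axiom that it converts distinguished triangles into long exact sequences then yields a homological functor landing in $\hrt$. The genuinely substantial input is the abelianness of the heart in \ref{itha}, whose proof in \cite{bbd} constructs kernels and cokernels out of truncations of cones and verifies the abelian category axioms; this is the step I expect to be the main obstacle, and I would not reprove it but rather cite loc. cit., noting only that all statements there carry over verbatim after interchanging the roles of $\cu_{t\le 0}$ and $\cu_{t\ge 0}$ dictated by the homological convention.
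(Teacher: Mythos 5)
Your proof is correct, and it takes a genuinely different route from the paper's: the paper disposes of the entire proposition with the single remark that all of these statements were essentially established in \S1.3 of \cite{bbd}, whereas you reprove the formal half (assertions \ref{itcan} and \ref{itperp}) directly from the axioms and cite loc.\ cit.\ only for the abelianness of the heart and the homological functor $H^t$. Your computations are carried out correctly in the paper's homological convention: the vanishing of both outer terms in the long exact sequence (including the use of axiom (ii) to get $R_tM[-1]\in \cu_{t\le 0}[-1]$) is exactly right, and your treatment of \ref{itperp} --- showing that the ``wrong'' truncation of $M$ has vanishing identity morphism, hence is zero --- cleanly avoids a circular appeal to retraction-closedness, which at that point is not yet available. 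Note also that ``$t$ generated by $\cp$'' is a definition of this paper (Definition \ref{dtstrh}(3)) rather than of \cite{bbd}, so your explicit shift computation $\cu(P[1],Y[1])=\cu(P,Y)$ is genuinely needed and is the correct one; in this respect your write-up is more careful than the bare citation. What your route buys is a verification that the citation survives the passage between the cohomological convention of \cite{bbd} and the homological one used here; what the paper's route buys is brevity. Two minor points: the functoriality of the connecting morphism $R_tM\to L_tM[1]$ deserves one more line (uniqueness of fill-ins for morphisms of $t$-decomposition triangles, via the same orthogonality; cf.\ Proposition 1.1.9 of \cite{bbd}), and the notation ``$t_{\ge 0}$'' for the left adjoint in assertion \ref{itcan} is a slip in the paper's statement (by the conventions of assertion \ref{itho} this functor is $t_{\le 0}$), which you have copied verbatim rather than flagged.
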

\begin{proof}
All of these statements were essentially established in \S1.3 of \cite{bbd}. 
\end{proof}

Now we are able to finish the proof of Proposition \ref{prmot}.

\begin{proof}[The proof of  Proposition \ref{prmot}(\ref{imotet}).]

Proposition \ref{ptstr}(\ref{itho},\ref{itperp}) easily implies that for  $\rgm'=\thomr{}_{\ge -2}\rgm$ the restrictions of the functors  $\dmer(\mgr(-),\rgm[2])$ and $\dmer(\mgr(-),\rgm'[2])$ to $\sm'$ are isomorphic indeed.
  Next,   Lemma 5.2 of \cite{kahnr} yields a description $\rgm$ as the corresponding total direct image of sheaves with respect to a morphism $\al$; it follows that   $H_{i}^{\thomr}(\rgm)=0$ for $i>0$ and $ H_{0}^{\thomr}(\rgm)=G_m\otimes R$ (see loc. cit). Loc. cit. also implies that  $ H_{-1}^{\thomr}(\rgm)=0$ (this is a form of Hilbert's theorem 90). 
Applying assertion Proposition \ref{prmot}(\ref{imot9}) to the long exact sequence coming from the corresponding $\thomr$-truncation of $\rgm'$ we obtain  for any $X\in \sv$ an 
 exact sequence as follows: $$H^2_{Nis}(X,G_m)\to \dmer(\mgr(X),\rgm'[2])(=
\bra(X))\to \for(S^{\bra})(X)\to H^3_{Nis}(X,G_m).$$ Since the groups $H^2_{Nis}(X,G_m)$ and $H^3_{Nis}(X,G_m)$ are well-known to be zero (see (6.2.2) and (6.4.2) of \cite{bev}), we obtain that $\for(S^{\bra})=\bra$ indeed.
\end{proof}

\begin{rema}\label{rz}
 1. Let us now describe a possible approach for studying unramified cohomology in the case where 
  $1/p\notin R$ (recall that this also means $p>0$); certainly, one may just take $R=\z$. Firstly, we note that the arguments relying on \cite{kabir} extend to this setting without any problem.


Next, we recall that the aforementioned weight structure $\wgbir$ on $\dmgbir$ exists in this case as well.

 Moreover, a reasonable candidate for the natural extension  $\wchows$ of $\wchowsg$ to $\dmger$ was proposed and studied in  \cite{bokum}. The main disadvantages of $\wchows$ is that we have no explicit description of 
 its heart, and we cannot prove that  
 it restricts to $\dmger$ (if $1/p\notin R$).

2. Recall that $\dmer\supset\dmger$ is a symmetric monoidal category as well. In Theorem 3.1.1(1) of \cite{bokum} it was proved that $\wchows$ restricts to the subcategory $\dmer\lan 1\ra$ of $\dmer$. In Proposition 3.2.1 of ibid. it was  deduced that there exists a weight-exact localization functor $\dmer\to \dmbir=\dmer/\dmer\lan 1\ra$. Moreover, the corresponding weight structure $\wbir$ is an extension of $\wgbir$; see Remark 3.2.2(1) of ibid. 

For this reason we formulate some more properties of  weight-exact localizations in Proposition \ref{plocw} below; note 
   also that part 1 of that proposition can be vastly generalized (see  \S3 of  \cite{bsnew}).

3. Another 
 relevant observation is that Theorem 3.2.3(I) of \cite{bvtr} easily yields the existence 
 a $t$-structure $\tcho$ on $\dmer$ that is {\it right adjacent} to $\wchows$ (i.e., $\dmer_{\tcho\ge 0}=\dmer_{\wchows\ge 0}$); see Remark 3.2.4(1) of ibid. The author is going to study the relation of $t$-structures obtained this way to unramified cohomology in detail in a forthcoming paper.
\end{rema}

\begin{pr}\label{plocw}
 Assume that $\cu$ is endowed with a weight structure $w$ that restricts to a triangulated subcategory $\du\subset \cu$,  and $\pi$ is the localization functor $\pi:\cu\to \cu'=\cu/\du$.   

\begin{enumerate}

\item\label{iwl2} Then there exists a weight structure  $w'$   on $\cu'$ such that the localization functor $\pi:\cu\to \cu'$ is weight-exact with respect to $(w,w')$. Moreover,    $\cu'_{w'\le 0}=\kar_{\cu'}(\pi(\cu_{w\le 0}))$,  $\cu'_{w'\ge 0}=\kar_{\cu'}(\pi(\cu_{w\ge 0}))$, and $\cu'_{w'= 0}=\kar_{\cu'}(\pi(\cu_{w= 0}))$.

Furthermore, the obvious functor $\frac \hw{\hw_{\du}}\to \hw'$ induced by $\pi$ is a full embedding; here $\hw_{\du}$ is the heart of the restriction of $w$ to $\du$, whereas $\frac \hw{\hw_{\du}}$ is the category whose object class equals $\obj \hw$ and  $\frac  \hw{\hw_{\du}} (X,Y)= \hw(X,Y)/(\sum_{Z\in \du_{w_{\du}=0}}\hw(Z,Y) \circ \hw(X,Z))$ for any $X,Y\in \obj \hw$.

\item\label{iwl312} 
 Assume that $N\in \cu'_{w'=0}$ and there exist $M',M\in \cu_{w=0}$ such that $N\bigoplus \pi(M')\cong \pi(M)$. Then there exists $M''\in \cu_{w=0}$ 
 such that $\pi(M'')\cong N$.

  Moreover, if  $\hw'$ along with the  class $\pi(\cu_{w= 0})\subset \cu'_{w'= 0}= \obj \hw'$   are closed with respect to countable $\hw'$-coproducts,  then $\pi(\cu_{w= 0})$   essentially equals  $\cu'_{w'= 0}$.
\end{enumerate}
\end{pr}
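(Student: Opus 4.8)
The plan is to reduce the first assertion to the weak idempotent completeness of the heart $\hw$ (Proposition \ref{pbw}(\ref{itstable})), using the full embedding $\Phi\colon\frac\hw{\hw_{\du}}\to\hw'$ furnished by part (\ref{iwl2}). Recall that $\cu'_{w'=0}=\obj\hw'$, and that $\Phi$ being a full embedding means exactly that $\hw(X,Y)\to\hw'(\pi X,\pi Y)$ is surjective with kernel the morphisms factoring through objects of $\du_{w_{\du}=0}=\obj\du\cap\cu_{w=0}$.

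First I would fix the splitting data for $N$: I write the isomorphism $\pi(M)\cong N\oplus\pi(M')$ as a split monomorphism $s\colon\pi(M')\to\pi(M)$ with retraction $r$, so that $N\cong\cok(s)$. Using fullness I lift $s,r$ to $s_0\in\hw(M',M)$ and $r_0\in\hw(M,M')$. The endomorphism $r_0s_0-\id_{M'}$ lies in the kernel of $\pi$ on Hom-groups, hence (after collecting a finite sum of factorizations into a single one through the corresponding direct sum, which again lies in $\du_{w_{\du}=0}$) it factors as $\beta\alpha$ with $\alpha\colon M'\to Z$, $\beta\colon Z\to M'$ and $Z\in\du_{w_{\du}=0}$.

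The key device is then to absorb this correction term. I set $S=\binom{s_0}{-\alpha}\colon M'\to M\oplus Z$ and $R=(r_0,\ \beta)\colon M\oplus Z\to M'$, so that $RS=r_0s_0-\beta\alpha=\id_{M'}$; thus $S$ is an honest split monomorphism in $\hw$ (note $M\oplus Z\in\cu_{w=0}$, since $\cu_{w=0}$ is closed under finite sums and retracts). Since $\hw$ is weakly idempotent complete, $S$ admits a complement, i.e.\ a cokernel $M''\in\cu_{w=0}$ with $M\oplus Z\cong M'\oplus M''$. Because $\pi$ is additive it preserves split monomorphisms and their cokernels, and $\pi(Z)=0$; hence $\pi(S)$ identifies with $s$, and $N\cong\cok(s)\cong\cok(\pi S)\cong\pi(\cok S)\cong\pi(M'')$, as required. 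I expect this absorption step to be the crux: it is what turns an endomorphism that is idempotent only "modulo $\du$" into a genuine retraction to which weak idempotent completeness applies, and phrasing the conclusion through the (functor-preserved, essentially unique) cokernel $\cok(s)$ rather than through a chosen complementary idempotent is what sidesteps the failure of cancellation for direct summands.

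For the "Moreover" I would run an Eilenberg swindle to reduce to the first part. Given $N\in\cu'_{w'=0}=\kar_{\cu'}(\pi(\cu_{w=0}))$, I write $\pi(M)\cong N\oplus N'$ with $N,N'\in\obj\hw'$, and form the countable coproduct $V=\bigoplus_{i\ge1}\pi(M)$, which by hypothesis again lies in $\pi(\cu_{w=0})$, say $V\cong\pi(M_V)$. Writing $V\cong(\bigoplus_{i\ge1}N)\oplus(\bigoplus_{i\ge1}N')=A\oplus B$ and using $A\cong N\oplus A$ gives $N\oplus V\cong V$, that is $N\oplus\pi(M_V)\cong\pi(M_V)$. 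Applying the first part with $M=M'=M_V$ produces $M''\in\cu_{w=0}$ with $\pi(M'')\cong N$, so every object of $\cu'_{w'=0}$ lies in $\pi(\cu_{w=0})$ up to isomorphism. The only points to verify here are that the coproducts $A,B$ exist in $\hw'$ and that $V$ remains in $\pi(\cu_{w=0})$, which are precisely the two coproduct-closure hypotheses.
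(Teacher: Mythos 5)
Your proof is correct and follows essentially the same route as the paper's: you lift the splitting data through the full embedding of part (\ref{iwl2}), absorb the error term $\id_{M'}-r_0\circ s_0$ (which factors through an object of $\hw_{\du}$) into an honest retraction $M'\to M\oplus Z\to M'$, invoke the weak idempotent completeness of $\hw$ (Proposition \ref{pbw}(\ref{itstable})), and conclude using $\pi(Z)\cong 0$; the ``Moreover'' statement is handled by the very same Eilenberg swindle with $\coprod_{\z}\pi(M_0)$. The only cosmetic deviation is that you identify $\pi(M'')$ with $N$ via the (essentially unique) cokernel of the split monomorphism, whereas the paper tracks the complementary idempotent through the direct sum diagram explicitly.
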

\begin{proof}

\ref{iwl2}. See Theorem 3.1.3(2),  Proposition 3.1.1(1,2), Theorem 3.2.2(3), and Remark 4.2.3 of \cite{bsnew}; note also that the first and the last statement in the assertion are given by Proposition 8.1.1 of \cite{bws}.	

\ref{iwl312}. 	  If  $N\bigoplus \pi(M')\cong \pi(M)$ then we can lift the corresponding monomorphism $i:\pi(M')\to \pi(M)$ and the projection $p':\pi(M)\to \pi(M')$ to $\cu$-morphisms $M'\stackrel{i}{\to}M \stackrel{p}{\to}M'$; see the previous assertion. 
 Moreover, since the restriction of $\pi$ to $\hw$ is the composition of the "factorization" of $\hw$ by $\hw_{\du}$ with a full embedding, the morphism $\id_{M'}-p\circ i$ factors through an object $D$ of $\hw_{\du}$. Hence $M'$ is a retract of $M\bigoplus D$, and applying 
 Proposition \ref{pbw}(\ref{itstable}) we obtain the following: there exists $M''\in \cu_{w=0}$ such that $M'\bigoplus M''\cong M\bigoplus D$ and the composition of the corresponding morphisms $M\to M'\to M$ equals $i\circ p$. Thus we can apply $\pi$ to the corresponding direct sum diagram to obtain that  $\pi(M')\bigoplus \pi(M'')\cong \pi(M)$  and the composition of the corresponding morphisms $\pi(M)\to \pi(M'')\to \pi(M)$ equals $\id_M-i'\circ p'$. Thus $\pi(M'')\cong N$ indeed.

Now recall that $\hw'$ is weakly idempotent complete by  Proposition \ref{pbw}(\ref{itstable}).  
  Hence assertion \ref{iwl2} implies that for any  $N_0\in \cu'_{w'=0}$ there exist $N'_0\in \cu'_{w'=0}$ and $M_0\in \cu_{w=0}$ such that $\pi(M_0)\cong N_0\bigoplus N'_0$.
	Thus it remains to verify that if   $\hw'$ and $\pi(\cu_{w= 0})$   are closed with respect to countable $\hw'$-coproducts then we can take $N_0'$ to belong to $ \pi(\cu_{w= 0})$. For thus purpose we apply 	the following well-known Eilenberg swindle argument: 
 we have $ \coprod_{\z}\pi(M_0)\cong \coprod_{\z} N_0 \bigoplus \coprod_{\z} N_0'\cong  N_0 \bigoplus \coprod_{\z} N_0 \bigoplus \coprod_{\z} N_0'\cong  N_0 \bigoplus \coprod_{\z}\pi(M_0)$; see the main result (Proposition) of \cite{freydsplit} for more detail.
\end{proof}

\begin{rema}\label{rsos}
1. The author is deeply grateful to Vladimir Sosnilo for 
 telling him the  main part of  Proposition \ref{plocw}(\ref{iwl312}).

2. Possibly, the 
 argument  above can be used for certain explicit calculations in the case where a decomposition of the sort $N\bigoplus \pi(M')\cong \pi(M)$ is known. 

3. It could make sense to apply the "weight lifting" statements of \cite{bsosnl} to motives. In particular, note that Theorem 3.3.1 of ibid. easily implies that in the setting of Proposition \ref{ploc} 
 the class $\cu'_{w'\ge 0}$ (resp.   $\cu'_{w'\le 0}$) essentially equals  $\pi(\cu_{w\ge 0})$ (resp. $\pi(\cu_{w\le 0})$); here one should also invoke Remark 3.3.2 of ibid. and categorical duality.
\end{rema}

\subsection{On weak weight complexes and their relation to Grothendieck groups}\label{skz}

Now let us discuss a certain generalization of  Theorem \ref{teuler}. To drop the assumption that $\cu$ possesses an $\infty$-enhancement (and $w$ is bounded) 
 we recall some of the theory of  so-called weak weight complex functors.

\begin{defi}\label{dbacksim}
Let $m_1,m_2:A\to B$ be  
morphisms of $\bu$-complexes, where $\bu$ is an additive category. Then 
we will write $m_1\backsim m_2$ if $m_1-m_2=d_Bh+jd_A$ for some collections of arrows $j^*,h^*:A^*\to B^{*-1}$.

We will call this relation the {\it weak homotopy one}. \end{defi}

To omit other technical definitions needed for the theory of weight complexes (cf. \S1.3 of \cite{bwcp}), we  will formulate the main properties of weight complex functors somewhat axiomatically.

\begin{pr}\label{pwtw}
I. Let $\bu$ be an additive category. 
\begin{enumerate}
\item\label{iwhecat}
Then factoring morphisms in $K(\bu)$ by the weak homotopy relation yields an additive category $\kw(\bu)$. Moreover, the corresponding full functor $K(\bu)\to \kw(\bu)$ is (additive and) conservative.

\item\label{iwhefu}
Let $\ca:\bu\to \au$ be an additive functor, where $\au$ is an abelian category. Then for any $B,B'\in \obj K(\bu)$ any pair of weakly homotopic morphisms $m_1,m_2\in C(\hw)(B,B')$  induce equal morphisms of the homology $H_*((\ca(B^i)))\to H_*((\ca(B'^i)))$.

Hence the correspondence $N\mapsto H_0(\ca(N^i))$ gives a well-defined functor $\kw(\bu)\to \au$.

\item\label{iwhefun} Applying an additive functor  $F:\bu\to \bu'$ to complexes termwise one obtains an additive functor $\kw(F):\kw(\bu)\to \kw(\bu')$.

\end{enumerate}
II. Assume that $\cu$ is endowed with a weight structure $w$.

Then there exists an additive functor $t:\cu\to \kw(\hw)$ that enjoys the following properties. 

\begin{enumerate}
\item\label{iwcbasew} The composition of the embedding $\hw\to \cu$ with $t$ is isomorphic to the obvious embedding $\hw \to \kw(\hw)$.

\item\label{irwcshw} Let $n\in \z$. Then $t\circ [n]_{\cu}\cong [n]_{\kw(\hw)}\circ t$, where  $[n]_{\kw(\hw)}$ is the obvious shift by $[n]$ (invertible) endofunctor of the category $\kw(\hw)$.

\item\label{iwconsw} If $M\in \cu_{w\le n}$ (resp. $M\in \cu_{w\ge n}$) then $t(M)$ belongs to $K(\hw)_{\wstu\le n}$ (resp. to $K(\hw)_{\wstu\ge n}$). 

\item\label{iwcexw} 
Let $M,M'\in \obj \cu$, $g\in \cu(M,M')$ (where $\cu$ is endowed with a weight structure $w$), and $h:M'\to \co(g)$ is the second side of a distinguished triangle containing $g$. 

Then there exists a lift of the $\kw(\hw)$-morphism chain $t(M)\stackrel{t(g)}{\to} t(M') \stackrel{t(h)}{\to} t(\co(g))$ to two sides of a distinguished triangle in $K(\hw)$.

\item\label{iwcfunctw} Let $\cu'$ be a triangulated category endowed with a weight structure $w'$; let
 $F:\cu\to \cu'$ be a weight-exact functor. Then the composition $t'\circ F$ is isomorphic to $\kw(\hf)\circ t$, where 
$t'$ is a weight complex functor corresponding to $w'$ and $\hf:\hw\to \hw'$ if the restriction of $F$ to hearts (see also assertion I.\ref{iwhefun} for the definition of $\kw(-)$). 

\item\label{iwcpurew} Let $\ca:\hw\to \au$ be an additive functor, where $\au$ is an abelian category. 
Then 
the composition $H^{\ca}=H_0\circ\kw(\ca)\circ t$  (that sends an object $M$ of $\cu$ into the zeroth homology of the complex $\ca(M^{j})$, where $(M^j)=t(M)$) is a homological functor. 

Moreover, if $w$ is bounded then this is the only homological functor (up to a unique isomorphism) whose restriction to $\hw$  equals $\ca$ and whose  restrictions to $\hw[i]$ for $i\neq 0$ vanish.

\end{enumerate}
\end{pr}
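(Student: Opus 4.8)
The plan is to dispatch Part I by direct verification and to reduce Part II to the construction and results of \S1.3 of \cite{bwcp} (and \S3, \S6.3 of \cite{bws}), isolating the two genuinely substantial points: conservativity in I.\ref{iwhecat} and uniqueness in II.\ref{iwcpurew}. First I would check that the weak homotopy relation of Definition \ref{dbacksim} is an additive congruence on $K(\bu)$: reflexivity, symmetry and transitivity are immediate (take $h=j=0$; negate the collections; add them), bi-additivity is clear, and the only computation is stability under composition --- if $m_1-m_2=d_Bh+jd_A$ and $n,p$ are chain maps then $nd_B=d_Cn$ and $d_Ap=pd_D$ give $n(m_1-m_2)=d_C(nh)+(nj)d_A$ and $(m_1-m_2)p=d_B(hp)+(jp)d_D$, again weak homotopies. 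Hence $\kw(\bu)$ is additive and the projection $K(\bu)\to\kw(\bu)$ is additive and full. Assertion I.\ref{iwhefu} is then one line: a boundary $d_Bh+jd_A$ induces the zero map on the homology of the termwise complex $\ca(N^i)$, so weakly homotopic maps agree on $H_*$ and $N\mapsto H_0(\ca(N^i))$ descends to $\kw(\bu)$; I.\ref{iwhefun} is immediate since an additive $F$ carries the defining formula of $\backsim$ termwise to the analogous one in $\bu'$. The only non-formal point is conservativity, for which I would invoke the fact (from \S1.3 of \cite{bwcp}) that a weak homotopy equivalence of complexes is an honest homotopy equivalence, so that a morphism which becomes invertible in $\kw(\bu)$ was already invertible in $K(\bu)$.

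For Part II I would recall the construction of $t\colon\cu\to\kw(\hw)$ through weight Postnikov towers: a choice of iterated weight decompositions of $M$ yields a complex with terms in $\cu_{w=0}$, and two choices differ exactly by weak homotopies, so by Part I the functor $t$ is well defined into $\kw(\hw)$. Assertions II.\ref{iwcbasew}, II.\ref{irwcshw} and II.\ref{iwconsw} read off directly: an object of $\hw$ admits the trivial tower, giving the one-term complex in degree $0$; an object of $\hw[n]$ gives a one-term complex placed in a single nonzero degree when $n\neq0$; and the stupid-filtration bounds on $t(M)$ reflect the position of the tower, exactly as in Proposition 1.3.4 of \cite{bwcp}. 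For II.\ref{iwcfunctw} I would use that a weight-exact $F$ sends weight decompositions to weight decompositions, hence a tower of $M$ to one of $F(M)$, giving $t'\circ F\cong\kw(\hf)\circ t$. The triangle-lifting statement II.\ref{iwcexw} I would also take from \cite{bwcp}, as it is the structural input for what follows.

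Granting II.\ref{iwcexw}, the homologicality in II.\ref{iwcpurew} I would obtain as follows. The assignment sending a complex $N=(N^i)$ to $H_0(\ca(N^i))$ is a homological functor on the honest homotopy category $K(\hw)$, being degree-$0$ homology of a complex. Given a distinguished triangle $M\xrightarrow{g}M'\to\co(g)\to M[1]$ in $\cu$, assertion II.\ref{iwcexw} lifts the chain $t(M)\to t(M')\to t(\co(g))$ to two consecutive sides of a distinguished triangle in $K(\hw)$; applying this homological functor and using II.\ref{irwcshw} to identify shifts produces the long exact sequence for $H^{\ca}=H_0\circ\kw(\ca)\circ t$. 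Together with II.\ref{iwcbasew} and II.\ref{irwcshw} this also shows that $H^{\ca}$ restricts to $\ca$ on $\hw$ and vanishes on $\hw[i]$ for $i\neq0$.

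The hard part will be the uniqueness clause of II.\ref{iwcpurew} when $w$ is bounded. Here I would run a dévissage along weight decompositions: every $M\in\obj\cu$ (so $M$ is $w$-bounded, as $\cu=\cu^b$) fits into a finite chain of weight decomposition triangles whose graded pieces lie in the various $\hw[i]$, on which any homological $H'$ with $H'|_{\hw}=\ca$ and $H'|_{\hw[i]}=0$ for $i\neq0$ is forced to agree with $H^{\ca}$; homologicality and a five-lemma argument then propagate this agreement up the tower. The delicate part is producing the comparison as a natural transformation and proving it is the unique one --- this requires a vanishing/orthogonality argument, and it is exactly where boundedness is indispensable, since it prevents nonzero natural transformations from being concealed in arbitrarily high weights. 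Rather than reprove this, I would deduce it from the weak analogue of Theorem 2.1.2(2,3) of \cite{bwcp}, which is precisely this uniqueness assertion; the remaining task is to verify that the hypotheses of loc. cit. apply to the weak weight complex functor, for which properties II.\ref{iwcbasew}--II.\ref{iwcexw} have been arranged.
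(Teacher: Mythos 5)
Your proposal is correct and matches the paper's treatment: the paper proves this proposition purely by citation (Part I to \cite[\S3.1]{bws} and Lemma 1.3.2 of \cite{bkw}, Part II to Proposition 1.3.4 and Theorem 2.1.2 of \cite{bwcp}), and you fill in the routine verifications (the weak homotopy relation being an additive congruence, the one-line homology argument, the Postnikov-tower construction) while deferring to exactly the same sources for the three genuinely substantive points --- conservativity of $K(\bu)\to \kw(\bu)$, the triangle-lifting property II.\ref{iwcexw}, and the uniqueness clause of II.\ref{iwcpurew}. Your identification of where boundedness is indispensable in the uniqueness argument is accurate, and your decision to quote rather than reprove it is the same choice the paper makes.
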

\begin{proof}
I. These statements easily follow from the results of \cite[\S3.1]{bws}; see Lemma 1.3.2 of \cite{bkw}. 

II. All these statements 
  are stated in \cite{bwcp}; see Proposition 1.3.4(6,7,8,12) 
	 and Theorem 2.1.2 of ibid.
\end{proof}

\begin{rema}\label{rwcw}
\begin{enumerate}
\item\label{irwc7} 
 $t$ can 
be "enhanced" to an  exact functor $t^{st}:\cu\to K(\hw)$ at least in the case where $\cu$ possesses an $\infty$-category model and either $w$ is bounded (see Proposition \ref{pwt} for this case) or $w$ is  {\it purely compactly generated} in the sense of  \cite[Remark 3.2.3(4)]{bwcp}; see Corollary 3.5 and Remark 3.6 of \cite{sosnwc}. 

Thus the functor $t^{st}$ exists for all "motivic" weight structures whose hearts consist of certain Chow motives. On the other hand, it is  currently not clear whether $t^{st}$ exists for the weight structure $\wchows$  mentioned in  Remark \ref{rz} (in the case $1/p\notin R$). 

\item\label{irwcu} 
In \S1.3 of \cite{bwcp} 
a  (weak) weight complex functor was 
 defined as a canonical additive functor $\cuw\to \kw(\hw)$, where $\cuw$ is a category canonically equivalent to $\cu$. Hence to define a functor $t$ as in Proposition \ref{pwtw} one should choose a splitting $\cu\to \cuw$ for this equivalence. Applying the conservativity of the projection $K(\hw)\to \kw(\hw)$ we obtain that for any object $M$ of $\cu$ the $K(\hw)$-isomorphism class of $t(M)$ does not depend on any choices. 

\item\label{irwc2} The weak homotopy equivalence relation was introduced  in \S3.1 of \cite{bws} independently from the earlier and closely related notion of {\it absolute homology}; cf. Theorem 2.1 of \cite{barrabs}. 
\end{enumerate}
\end{rema}

Now we are able to generalize Theorem \ref{teuler} significantly (recall that the corresponding $K_0$-groups are defined in Definition \ref{dk0}); certainly, the proofs have much in common. 
One of the most important cases is $\cu'=\cu$ and $\bu=\hw$; yet cf.  Remark \ref{reulerw}(3) below.

\begin{theo}\label{teulerw}
Assume that $\cu$ is endowed with a weight structure $w$,   $\cu'$ is a triangulated subcategory of $\cu$,  $\bu$ is an additive subcategory of $\hw$,  $N$ is an object of $\cu'$, $F:\bu\to \au$ is an additive functor (and $\au$ is additive), and   for any object $M$ of $\cu'$ the complex $t(M)$ is homotopy equivalent to an object $(M^i)$ of $K(\bu)$ such that $F(M^i)=0$ for almost all $i\in \z$.

I. 
 1. Then the correspondence $M\mapsto \sum_{i\in \z} (-1)^i[F(M^i)]$ gives a well-defined homomorphism $F_{K_0}:K_0^{\tr}(\cu')\to K_0^{\add}(\au)$. 

2. Assume that $\bu=\hw\subset \cu'$, $\au$ is an abelian category, and there exists a $\cu'$-morphism $h$ either from $N'$ to  $N$ or vice versa such that $F_{K_0}([\co(h)])=0$ and $N'\in \cu_{w=0}$. Then $F_{K_0}([N])=[F(N')]$. 

Moreover, if $H_j^F(\co(h))=0$ for 
$j=0,1$, then 
$F_{K_0}([N])=[H^{F}(N)]$; here  the homological functor $H^F$ is the one given by  Proposition \ref{pwtw}(II.\ref{iwcpurew}). 

3. Assume that $\au$ is abelian semi-simple. Then $F_{K_0}(N)=\sum_{i\in \z} (-1)^i[H^F_i(N)]$.

II. Assume that $\cu'=\cu$ and  $\bu$ 
strongly generates $\cu$.

1. Then the corresponding homomorphism $\id_{\bu,K_0}: K_0^{\tr}(\cu)\to K_0^{\add}(\bu)$ is an isomorphism.

In particular, this is the case if 
 $\bu=\hw$. 

2. Assume that $\bu=\hw$, $\au$ is abelian, $F=F'\circ \underline{HG}\circ i_w$, 
 where $i_w$ is the embedding $\hw\to \cu$, $\underline{HG}:\hw\to \hv$ is the restriction of a weight-exact functor $G:(\cu,w)\to (\du,v)$ to hearts, $F':\hv\to \au$ is an additive functor,  and for some $j\in \z$ 
 the object $G(N)$ belongs to $\du_{w=j}$. Then we have $F_{K_0}([N])=(-1)^j[H^{F}_j(N)]$. 
\end{theo}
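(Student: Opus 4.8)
The plan is to compute $F_{K_0}([N])$ through the weak weight complex of $N$ and then to transport the computation through $G$ into $(\du,v)$, where the hypothesis $G(N)\in\du_{v=j}$ forces the relevant complex to be concentrated in a single degree. First I would fix a representative $t(N)\cong(N^i)$ in $K(\hw)$ as furnished by our standing assumption (so that $F(N^i)=0$ for almost all $i$); then by definition $F_{K_0}([N])=\sum_i(-1)^i[F(N^i)]$. Since $F=F'\circ\underline{HG}\circ i_w$, for $X\in\hw$ we have $F(X)=F'(\underline{HG}(X))=F'(G(X))$, and each $N^i\in\cu_{w=0}$ is sent by the weight-exact $G$ into $\hv$ (Proposition \ref{pbw}(\ref{iwex})), so $\underline{HG}(N^i)=G(N^i)$ and $F(N^i)=F'(G(N^i))$. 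Thus the complex $(F(N^i))$ is $\kw(F)$ applied to $(N^i)$, i.e. $\kw(F')(\kw(\underline{HG})((N^i)))$, and I would invoke the functoriality of the weak weight complex functor (Proposition \ref{pwtw}(II.\ref{iwcfunctw})) to identify $\kw(\underline{HG})((N^i))\cong t_v(G(N))$ in $\kw(\hv)$.

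The core step is to evaluate $t_v(G(N))$. Writing $V_0=G(N)[-j]\in\hv$ (legitimate as $G(N)\in\du_{v=j}$), Proposition \ref{pwtw}(II.\ref{iwcbasew},II.\ref{irwcshw}) gives $t_v(G(N))\cong t_v(V_0)[j]\cong V_0[j]$, a complex concentrated in degree $j$. Applying $\kw(F')$ (Proposition \ref{pwtw}(I.\ref{iwhefun})) I obtain a $\kw(\au)$-isomorphism $(F(N^i))\cong F'(V_0)[j]$, where both sides are bounded complexes (the left because $F(N^i)=0$ for almost all $i$). Here is the crucial point: by Proposition \ref{pwtw}(I.\ref{iwhecat}) the projection $K(\au)\to\kw(\au)$ is full and conservative, so lifting this $\kw(\au)$-isomorphism to a $K(\au)$-morphism via fullness and applying conservativity shows it is in fact a genuine homotopy equivalence in $K(\au)$. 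Homotopy-equivalent bounded complexes have the same class in $K_0^{\add}(\au)$ (this is what underlies the well-definedness in part I.1, via $K_0^{\tr}(K^b(\au))\cong K_0^{\add}(\au)$), whence $F_{K_0}([N])=\sum_i(-1)^i[F(N^i)]=(-1)^j[F'(V_0)]$.

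It then remains to identify $F'(V_0)$ with $H^F_j(N)$. By the definition of the pure functor (Proposition \ref{pwtw}(II.\ref{iwcpurew})) together with shift compatibility (Proposition \ref{pwtw}(II.\ref{irwcshw})), $H^F_j(N)=H^F(N[-j])$ is the degree-$j$ homology of the complex $(F(N^i))$; since homology is a homotopy invariant and $(F(N^i))\cong F'(V_0)[j]$ is concentrated in degree $j$, this homology is exactly $F'(V_0)$. Combining, $F_{K_0}([N])=(-1)^j[F'(V_0)]=(-1)^j[H^F_j(N)]$, as claimed. I expect the main obstacle to be precisely the invariance of the Euler characteristic under weak homotopy equivalence; the resolution above — upgrading a $\kw$-isomorphism of bounded complexes to an honest homotopy equivalence by fullness and conservativity of $K(\au)\to\kw(\au)$ — is the one delicate ingredient, with the rest being bookkeeping with shifts and the functoriality of $t$.
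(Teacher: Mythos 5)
Your proposal addresses only part II.2 of the theorem (taking the well-definedness statement of part I.1 as given), and for that part it is correct and essentially the paper's own argument: both proofs transport the computation along $G$ via Proposition \ref{pwtw}(II.\ref{iwcfunctw}) (the paper phrases this as reducing to the case $\du=\cu$, $G=\id_{\cu}$, $N\in\cu_{w=j}$), observe that the weight complex of $G(N)\in\du_{v=j}$ is a one-term complex concentrated in a single degree, and conclude by the homotopy-invariance of the Euler characteristic established in I.1. Your explicit step of upgrading the $\kw(\au)$-isomorphism to a genuine $K(\au)$-homotopy equivalence via fullness and conservativity of the projection $K(\au)\to\kw(\au)$ is precisely the mechanism the paper uses implicitly through Remark \ref{rwcw}(\ref{irwcu}), so the two arguments differ only in bookkeeping.
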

\begin{proof}
I.1. First we check that our correspondence 
 gives a 
 function $\obj \cu'\to K_0^{\add}(\au)$ whose values on isomorphic objects are equal. For this purpose it suffices to verify for $\bu$-complexes $(M^i)\cong_{\kw(\hw)} (N^i)$ that $\sum_{i\in \z} (-1)^i[F(M^i)]=\sum_{i\in \z} (-1)^i[F(N^i)]$ if almost all of the summands in both parts are zero. Hence for the (contractible) cone $C=(C^i)$ of the corresponding $K(\bu)$-isomorphism (see Remark \ref{rwc}(\ref{irwcu})) we should prove that $\sum_{i\in \z} (-1)^i[F(C^i)]=0$. 

Note now that $F$ canonically extends to a functor $\kar(F):\kar(\bu)\to \kar(\au)$. Next, Proposition 10.9 of \cite{buhler} 
  says that $C$ splits as a $\kar(\bu)$-complex, i.e., $C$ is $C(\kar(\bu))$-isomorphic to $\bigoplus \id_{O^i}[-i]$ for certain $O^i\in \obj \kar(\bu)$. Our assumptions obviously imply the existence of $N_0>0$ such  that $\kar(F)(O^i)=0$ if $i<-N_0$ or $i>N_0$. Hence  $$\bigoplus_{-j\le i\le j}F(C^{2i})\cong  \bigoplus_{-j\le i\le j}F(C^{2i+1})$$ for $j\gg 0$, and we obtain the existence and the $\cu$-isomorphism invariance  of the function in question. 

Lastly, Proposition \ref{pwtw}(I.\ref{iwhecat}, II.\ref{iwcexw}) implies that for any $\cu'$-morphism $M\to M'$ there exists a $K(\hw)$-distinguished triangle $(M^i)\to (M'{}^i)\to t(\co(f))\to (M^i)[1]$, where $(M^i)$ and $(M'{}^i)$ are any  $\bu$-complexes that are isomorphic to $t(M)$ and $t(M')$, respectively. The existence of    (a group homomorpism) $F_{K_0}$ 
 follows immediately.

2. Immediately  from the definition of  $K_0^{\tr}(\cu')$, we have $F_{K_0}([N])=F_{K_0}([N'])$. Next, ("the correctness part" of) assertion I.1 implies that $F_{K_0}([N])=[H^F(N')]=F_{K_0}([N'])$.

Lastly, if $H^F(\co(h))=0=H^F(\co(h)[-1])$ then we have 
 $H^F(N')=H^F(N)$,  since the functor $H^F$ is homological.


3. This statement is an immediate consequence of our definitions. 

II.1.  
 The embedding $\bu\to \cu$ obviously gives a homomorphism  $K_0^{\add}(\bu)\to K_0^{\tr}(\cu)$; it is surjective since $\bu$ strongly generates $\cu$.  Hence $\id_{\bu,K_0}$ is an isomorphism, since its  composition with 
  $F_{\id}$ obviously equals $\id_{K_0^{\add}(\bu)}$. 
	
	Lastly, one can apply this statement if 
	  $\bu=\hw$  according to  Proposition \ref{pbw}(\ref{igenlm}); note here that $w$ is bounded according to Proposition \ref{pbw}(\ref{igen}).  

2. 
Proposition \ref{pwtw}(II.\ref{iwcfunctw}) easily implies that $F_{K_0}(N)=F'_{K_0}([G(N)])$. Hence we can assume that $\du=\cu$, $v=w$,  $G$ is the identity on $\cu$, $F=F'$, and $N\in \cu_{w=j}$. Then $N'=N[-j]\in \cu_{w=0}$; hence 
 for $t(N)=(N^i)$ the complex $(F(N^i))$ is $K(\au)$-isomorphic to the one-term complex $(F(N'))[j]$. 
 The  result in question follows immediately; cf. the proof of assertion I.2. 
\end{proof}

\begin{rema}\label{reulerw}
1. Part II.1 of our theorem generalizes Theorem 5.3.1 of \cite{bws}, where it was assumed that  $\bu$ is idempotent complete and equals $\hw$. 

Moreover, our theorem 
 clearly generalizes Theorem 1 of \cite{rose} where $\cu=K^b(\bu)$ was considered. On the other hand,  
 our arguments essentially "involve" Proposition 1 of ibid., and the formulation was partially inspired by ibid.

2. Obviously, 
  the homomorphism $F_{K_0}$ naturally extends to the triangulated subcategory of $\cu$ 
	 strongly generated by $\obj \cu'\cup \obj \bu$. 

 Next, if $\bu\subset \cu'$ then for any object $M$ of $\cu'$ that belongs to its subcategory strongly generated by $\bu$ and any homomorphism $h$ from $K_0^{\tr}(\cu')$ into an abelian group the 
 element  $h(M)$ is clearly canonically determined by the values of $h$ at the set $\{[M],\ M\in \obj \bu\}\subset K_0(\cu')$. 

In particular, the isomorphism provided by part II.1 of our theorem is canonical. 

3. Our motivation for considering the case $\cu'\neq \cu$ comes from   \cite{bokum}; see Remark \ref{rz}. In this context one can take $\cu=\dmer$, but $\cu'$ should be smaller since $K_0^{\tr}(\dmer)$ is easily seen to be zero.  

4. Probably, one can apply Theorem \ref{teuler}(II.2) to obtain a proof of Theorem \ref{tceked} that will not depend on Theorem \ref{tmotw}(\ref{imotp2}). For this purpose one may take $G$ to be the natural functor $\dmger\to \dmgbir$.
\end{rema}

\end{document}